\newtheorem{df}{\bf Definition}[section]
\newtheorem{thm}[df]{\bf Theorem}
\newtheorem{lem}[df]{\bf Lemma}
\newtheorem*{conj}{\bf Haagerup--St\o rmer's Conjecture}
\newtheorem*{claim}{\bf Claim}
\newtheorem{thmA}{\bf Theorem}
\newtheorem{corA}[thmA]{\bf Corollary}
\newcommand{\R}{\mathbb{R}}
\newcommand{\C}{\mathbb{C}}
\newcommand{\Z}{\mathbb{Z}}
\newcommand{\N}{\mathbb{N}}
\newcommand{\B}{\mathbb{B}}
\newcommand{\M}{\mathbb{M}}
\newcommand{\ri}{\mathrm{i}}
\newcommand{\actson}{\curvearrowright}
\newcommand{\Ad}{\operatorname{Ad}}
\newcommand{\id}{\text{\rm id}}
\newcommand{\Aut}{\operatorname{Aut}}
\newcommand{\Tr}{\mathord{\text{\rm Tr}}}
\newcommand{\ovt}{\mathbin{\overline{\otimes}}}
\newcommand{\Int}{\operatorname{Int}}
\title{\bf Haagerup and St\o rmer's conjecture for\\ pointwise inner automorphisms}
\author{Yusuke Isono\thanks{Research Institute for Mathematical Sciences, Kyoto University, 606-8502, Kyoto, Japan \protect \\  E-mail: \texttt{isono@kurims.kyoto-u.ac.jp} \protect \\  
YI is supported by JSPS KAKENHI Grant Number 20K14324.}}
\date{}
\begin{document}
\maketitle

\begin{abstract}
In 1988, Haagerup and St\o rmer conjectured that any pointwise inner automorphism of a type $\rm III_1$ factor is a composition of an inner and a modular automorphism. We study this conjecture and prove that any type $\rm III_1$ factor with trivial bicentralizer indeed satisfies this condition. In particular, this shows that Haagerup and St\o rmer's conjecture holds in full generality if Connes' bicentralizer problem has an affirmative answer. Our proof is based on Popa's intertwining theory and Marrakchi's recent work on relative bicentralizers. 
\end{abstract}

\section{Introduction}

	Let $M$ be a von Neumann algebra and $\Aut(M)$ the set of all automorphisms on $M$. We say that $\theta\in \Aut(M)$ is \textit{pointwise inner} \cite{HS87} if for any positive linear functional $\varphi \in M_*^+$, there is a unitary $u\in \mathcal U(M)$ such that $\theta(\varphi)=u\varphi u^*$, where we used the notation $\theta(\varphi):=\varphi\circ \theta^{-1}$ and $x\varphi y := \varphi(y \, \cdot \, x)$ for $x,y\in M$. 
In other words, $\theta$ is pointwise inner if the induced map $\theta\colon M_\ast^+ \to M_\ast^+$ is inner at a pointwise level. This notion naturally appeared in the study of unitary equivalence relations on state spaces on von Neumann algebras and was intensively studied by Haagerup and St\o rmer \cite{HS87,HS88,HS91}. 

Any inner automorphism is pointwise inner. Indeed, we can choose the same unitary for all elements in $M_\ast^+$. A nontrivial and motivated example comes from \textit{Tomta--Takesaki's modular theory}, that is, any \textit{modular automorphism} $\sigma_t^\varphi$ is pointwise inner, where $\varphi\in M_\ast^+$ is a faithful state and $t\in \R$, see Section \ref{Preliminaries}. 
Hence, any composition $\Ad(u)\circ \sigma^\varphi_t$ of an inner and a modular automorphism is pointwise inner because the pointwise inner property is closed under compositions. Haagerup and St\o rmer conjectured that the opposite phenomena also holds for any type $\rm III_1$ factor \cite{HS88}.

\begin{conj}
	Let $M$ be a type $\rm III_1$ factor with separable predual. Then any pointwise inner automorphism is a composition of an inner and a modular automorphism.
\end{conj}

We note that if $M$ is a factor that is not of type $\rm III_1$, similar results were already obtained \cite{HS87,HS88}, hence the only remaining problem for characterizing pointwise inner automorphisms is the case for type $\rm III_1$ factors. 
We also note that the separability assumption is necessary, see \cite[Section 6]{HS88} and \cite[Remark 4.21]{AH12}.

%The conjecture is proved for several classes of factors, including all semifinite factors and type $\rm III_\lambda$ $(0\leq \lambda <1)$ factors \cite{HS87,HS88}. (We need \textit{extend modular automorphisms} for the type $\rm III_0$ case.) 
%The type $\rm III_1$ factor case is still open, although there are some examples satisfying the conjecture \cite{HS91,HI22}. Thus, this type $\rm III_1$ factor case is the only remaining case for the conjecture.

In the present article, we give a partial but satisfactory answer to this conjecture. To explain it, we prepare terminology. For any inclusion of von Neumann algebras $N\subset M$, we say it is \textit{irreducible} if $N'\cap M \subset N$, and is \textit{with expectation} if there exists a faithful normal conditional expectation from $M$ onto $N$. 
For any faithful state (or semifinite weight) $\varphi\in M_\ast$, we denote by $M_\varphi$ the \textit{centralizer of $\varphi$}, which is the fixed point algebra of $\sigma^\varphi$. 
When $M$ is a type $\rm III_1$ factor with separable predual, the existence of a faithful state $\varphi\in M_\ast$ with $M_\varphi'\cap M=\C$ is equivalent to having \textit{trivial bicentralizer}, see Section \ref{Preliminaries}. In this case, $M_\varphi\subset M$ is an irreducible subfactor of type $\rm II_1$ with expectation, and the existence of such a subfactor is very useful in many contexts. 
\textit{Connes' bicentralizer problem}, which is one of the most important problem for type $\rm III$ factors, asks if any type $\rm III_1$ factor with separable predual has trivial bicentralizer. To the best of our knowledge, all concrete examples of type $\rm III_1$ factors have trivial bicentralizers. Further, if $M$ is any type $\rm III_1$ factor, then the tensor product $M\ovt R_\infty$ has trivial bicentralizer, where $R_\infty$ is the Araki--Woods factor of type $\rm III_1$ \cite{Ma18}.

Now, we introduce our main theorem. For objects in item (3) below, see Section \ref{Preliminaries}.

\begin{thmA}\label{thmA}
	Let $M$ be a type $\rm III_1$ factor with separable predual and assume it has trivial bicentralizer. Fix any faithful state $\varphi\in M_\ast$ such that $M_\varphi'\cap M=\C$. Then for any $\theta\in \Aut(M)$, the following conditions are equivalent.
\begin{enumerate}
	\item The automorphism $\theta$ is pointwise inner.

	\item For any faithful state $\psi\in M_\ast$ with $M_\psi'\cap M =\C$, there exists $u\in \mathcal U(M)$ such that $\theta(x) = uxu^*$ for all $x\in M_\psi$. 

	\item There exists $u\in \mathcal U(M)$ such that $\theta^\omega(x)=uxu^*$ for all $x\in (M^\omega)_{\varphi^\omega}$, where $\omega$ is any fixed free ultrafilter on $\N$.

	\item There exist $u\in \mathcal U(M)$ and $t\in \R$ such that $\theta = \Ad(u)\circ \sigma_t^\varphi$.

\end{enumerate}
\end{thmA}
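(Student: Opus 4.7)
The implications $(4) \Rightarrow (1), (2), (3)$ are elementary applications of Tomita--Takesaki theory. Any modular automorphism $\sigma^\varphi_t$ is pointwise inner by analytic continuation of the Connes cocycle, inner automorphisms are trivially pointwise inner, and the class of pointwise inner automorphisms is closed under composition, yielding $(4) \Rightarrow (1)$. For $(4) \Rightarrow (2)$, if $\theta = \Ad(u) \circ \sigma^\varphi_t$ and $\psi$ satisfies $M_\psi' \cap M = \C$, then the Connes cocycle $w := [D\psi : D\varphi]_t \in M$ satisfies $\sigma^\varphi_t(x) = w^* x w$ for all $x \in M_\psi$, so $\theta(x) = (uw^*)x(uw^*)^*$ on $M_\psi$. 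And $(4) \Rightarrow (3)$ is immediate, since $\sigma^{\varphi^\omega}_t$ fixes $(M^\omega)_{\varphi^\omega}$ pointwise.

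For the nontrivial direction my plan is to close the cycle via $(1) \Rightarrow (3) \Rightarrow (4) \Rightarrow (2)$. Assuming $\theta$ is pointwise inner, applying the definition to $\varphi$ itself yields $u_0 \in \mathcal{U}(M)$ with $\theta(\varphi) = u_0 \varphi u_0^*$, so after replacing $\theta$ by $\Ad(u_0^*) \circ \theta$ we may assume $\theta(\varphi) = \varphi$; then $\theta$ commutes with $\sigma^\varphi$ and preserves the type $\mathrm{II}_1$ factor $(M^\omega)_{\varphi^\omega}$. The crux of $(1) \Rightarrow (3)$ is to upgrade the state-dependent unitaries provided by pointwise innerness to a single $u \in \mathcal{U}(M)$ implementing $\theta^\omega$ on $(M^\omega)_{\varphi^\omega}$. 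I would apply Popa's intertwining-by-bimodules to the irreducible inclusion $M_\varphi \subset M$ and the automorphism induced by $\theta^\omega$: testing the pointwise inner hypothesis against the rich family of normal states whose centralizers contain $M_\varphi$ (parametrized via the Connes cocycle by positive invertible elements of $M_\varphi$) produces a bounded supply of partial intertwiners for the $M_\varphi$-$M_\varphi$ bimodule ${}_\theta L^2(M)_{M_\varphi}$. Marrakchi's recent triviality result for the relative bicentralizer of $M_\varphi \subset M$, valid under trivial bicentralizer of $M$, prevents these intertwiners from escaping in the Popa sense, and an averaging/ultrapower argument extracts the desired $u$.

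For $(3) \Rightarrow (4)$, set $\alpha := \Ad(u^*) \circ \theta$, so that $\alpha^\omega$ fixes $(M^\omega)_{\varphi^\omega}$ pointwise and in particular $\alpha|_{M_\varphi} = \id$. The Connes cocycle $[D\alpha(\varphi) : D\varphi]_t$ commutes with $M_\varphi$, hence lies in $M_\varphi' \cap M = \C$, which forces $\alpha(\varphi) = \varphi$. Thus $\alpha \in \Aut(M, \varphi)$ fixes $M_\varphi$ pointwise and commutes with $\sigma^\varphi$. The stronger fact that $\alpha^\omega$ fixes the full centralizer of $\varphi^\omega$ in $M^\omega$, combined with Marrakchi's description of the $\varphi$-preserving automorphisms that centralize a bicentralizer-dense subalgebra (available under trivial bicentralizer), identifies $\alpha = \sigma^\varphi_t$ for a unique $t \in \R$, giving $(4)$.

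The main obstacle is precisely the step $(1) \Rightarrow (3)$: pointwise innerness provides only state-by-state unitaries $\{u_{\varphi'}\}$, and producing a single uniform intertwiner on $(M^\omega)_{\varphi^\omega}$ requires a nontrivial non-vanishing input. Popa's intertwining theory is native to tracial finite von Neumann algebras; its transfer to the type $\mathrm{III}_1$ setting along $M_\varphi \subset M$ must lean on Marrakchi's relative bicentralizer triviality as the indispensable tool preventing the $u_{\varphi'}$ from drifting, and it is at this point that the trivial bicentralizer hypothesis on $M$ is genuinely used.
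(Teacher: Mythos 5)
There are genuine gaps, both structural and substantive. Structurally, your chain is $(1)\Rightarrow(3)\Rightarrow(4)$ together with $(4)\Rightarrow(1),(2),(3)$; nowhere do you show that $(2)$ implies any of the other conditions, so $(2)$ is established only as a consequence, not as an equivalent condition, and the theorem is not proved. This is not cosmetic: the paper's route is $(1)\Rightarrow(2)\Rightarrow(3)\Rightarrow(4)$, and the step $(2)\Rightarrow(3)$ is genuinely nontrivial --- it runs the $2\times2$ matrix trick on $M\subset M\otimes\M_2$ and invokes Theorem C of \cite{AHHM18} (if the relative bicentralizer equals the relative commutant, there is an amenable $\rm II_1$ subfactor $R$ with $R'\cap M_2=M'\cap M_2$) precisely in order to manufacture a state $\psi$ with $M_\psi'\cap M=\C$ to which hypothesis $(2)$ can be applied. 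Nothing in your outline recovers this.

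More importantly, the two hard implications are left as black boxes with the wrong tools named. For $(1)\Rightarrow(3)$ (or the paper's $(1)\Rightarrow(2)$), the obstacle you correctly identify --- the unitaries $u_\psi$ vary with $\psi$ with no a priori coherence --- is resolved in the paper by a measurable selection/Lusin argument (Lemma \ref{lem-section}) producing a \emph{continuous} family $p\mapsto u_p$ for the states $\varphi_{a^p}$, $a\in M_\psi$ positive invertible; the continuity is what allows two intertwiners $u_p^*eu_p$, $u_q^*eu_q$ to have nonzero product and be combined (Lemma \ref{lem-intertwiner-tensor}) to yield $A\preceq_M\theta(A)$ for a diffuse abelian $A$. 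Even then, passing from $A\preceq_M\theta(A)$ to innerness of $\theta$ on $M_\psi$ requires the $\mathcal G$-singular masa theorem (the entire appendix, adapting Popa and Houdayer--Popa to type III), not an ``averaging/ultrapower argument.'' Your appeal to ``Marrakchi's triviality of the relative bicentralizer of $M_\varphi\subset M$'' is misplaced: $M_\varphi$ is a $\rm II_1$ factor, $(M_\varphi^\omega)'\cap M\subset M_\varphi'\cap M=\C$ holds trivially, and the relative bicentralizer machinery is actually applied in the paper to $M\subset M\otimes\M_2$ and to $M\subset M\rtimes\Z/k\Z$. Likewise, in $(3)\Rightarrow(4)$ the ``description of $\varphi$-preserving automorphisms centralizing a bicentralizer-dense subalgebra'' that would identify $\alpha$ with some $\sigma_t^\varphi$ is exactly the statement to be proved; the paper gets it by introducing $H(\theta)=\{n\in\Z:\theta^n=\Ad(u)\circ\sigma_t^\varphi\}=k\Z$, reducing to $\theta^k=\id$, and applying Marrakchi's Theorem E(5) to the regular inclusion $M\subset M\rtimes_\theta\Z/k\Z$ to force $k=1$. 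As written, neither hard step is a proof.
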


By the implication (1)$\Rightarrow$(4), we immediately get the following corollary. 
\begin{corA}
	Haagerup--St\o rmer's conjecture holds for any type $\rm III_1$ factor with separable predual that has trivial bicentralizer. In particular, the conjecture holds in full generality if Connes' bicentralizer problem has an affirmative answer.
\end{corA}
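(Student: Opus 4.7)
The implication $(4)\Rightarrow(1)$ is classical: every modular automorphism $\sigma_t^\varphi$ is pointwise inner, implemented state-by-state by Connes cocycles $[D\psi:D\varphi]_t$; inner automorphisms trivially are; and the set of pointwise inner automorphisms is closed under composition. The implication $(4)\Rightarrow(2)$ is equally direct: if $\theta=\Ad(u)\circ\sigma_t^\varphi$ then $\sigma_t^\varphi|_{M_\psi}=\Ad([D\psi:D\varphi]_t^{*})|_{M_\psi}$ (because $\sigma^\psi|_{M_\psi}=\id$), so $v:=u\,[D\psi:D\varphi]_t^{*}$ witnesses $(2)$. Similarly $(3)\Rightarrow(2)$ holds immediately for $\psi=\varphi$, and for other $\psi$ by reapplying the equivalence $(1)\Leftrightarrow(3)$ with $\psi$ in place of $\varphi$, since the trivial bicentralizer hypothesis is intrinsic to $M$. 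The substantive content is thus $(1)\Rightarrow(3)$ and $(3)\Rightarrow(4)$.

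For $(1)\Rightarrow(3)$, I would first apply pointwise innerness to $\varphi$ itself: there is $v\in\mathcal U(M)$ with $\theta(\varphi)=v\varphi v^{*}$, so replacing $\theta$ by $\Ad(v^{*})\circ\theta$ we may assume $\theta(\varphi)=\varphi$; then $\theta$ commutes with $\sigma^\varphi$, preserves $M_\varphi$, and $\theta^\omega$ preserves $(M^\omega)_{\varphi^\omega}$. The task is to find a single $u\in\mathcal U(M)$ with $\theta^\omega(x)=uxu^{*}$ for all $x\in(M^\omega)_{\varphi^\omega}$. The strategy is to apply pointwise innerness to a rich family of perturbed states---for instance $\psi_h=\varphi(h\,\cdot\,)$ with $h$ positive in $M_\varphi$, whose centralizer is $M_\varphi\cap\{h\}'$---and extract, for each finite $F\subset(M^\omega)_{\varphi^\omega}$ and $\varepsilon>0$, a unitary $w\in\mathcal U(M)$ approximately intertwining $\theta^\omega$ on $F$; these assemble into an element $U=(w_n)\in M^\omega$ with $\theta^\omega(x)=UxU^{*}$ for all $x\in(M^\omega)_{\varphi^\omega}$. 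The crucial final step is to descend $U$ from $M^\omega$ to $M$: this is precisely where Popa's intertwining-by-bimodules combines with Marrakchi's recent relative bicentralizer framework. The trivial bicentralizer hypothesis yields, via Haagerup's theorem, $(M^\omega)_{\varphi^\omega}'\cap M=\C$ and, via Marrakchi, triviality of the relative bicentralizer of the inclusion $M_\varphi\subset M$, which is exactly the input needed to force the implementing unitary into $M$.

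For $(3)\Rightarrow(4)$, set $\alpha:=\Ad(u^{*})\circ\theta$, so that $\alpha^\omega$ is the identity on $(M^\omega)_{\varphi^\omega}$, and in particular $\alpha|_{M_\varphi}=\id$. The centralizer $M_{\alpha(\varphi)}$ contains $\alpha(M_\varphi)=M_\varphi$, and a standard Connes-cocycle computation using $M_\varphi'\cap M=\C$ shows that any state $\psi$ with $M_\psi\supseteq M_\varphi$ forces $[D\psi:D\varphi]_t$ to be a scalar character and hence $\psi=\varphi$; thus $\alpha(\varphi)=\varphi$. Therefore $\alpha$ commutes with $\sigma^\varphi$ and fixes $M_\varphi$ pointwise. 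Passing to the continuous core $\widetilde M:=M\rtimes_{\sigma^\varphi}\R$, in which $M_\varphi\ovt L(\R)$ sits as an irreducible subalgebra carrying the canonical dual $\R$-action, one identifies $\alpha$ with a translation by some $t\in\R$, whence $\alpha=\sigma_t^\varphi$, completing $(4)$. The principal obstacle throughout is the descent step in $(1)\Rightarrow(3)$: pointwise innerness supplies only state-by-state unitaries, whereas the conclusion requires a single unitary in $M$ implementing $\theta^\omega$ on the genuinely larger subalgebra $(M^\omega)_{\varphi^\omega}$, and it is precisely this gluing that Popa's intertwining-by-bimodules and Marrakchi's relative bicentralizer theorems---both activated by the trivial bicentralizer hypothesis---are brought in to accomplish.
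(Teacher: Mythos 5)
The corollary is, in the paper, an immediate consequence of the implication $(1)\Rightarrow(4)$ of Theorem \ref{thmA}, so what you are really proposing is a proof of the theorem itself. Your easy directions are fine: $(4)\Rightarrow(1)$, $(4)\Rightarrow(2)$, and the observation that $\alpha|_{M_\varphi}=\id$ together with $M_\varphi'\cap M=\C$ forces $[D\alpha(\varphi):D\varphi]_t$ to be a scalar character and hence $\alpha(\varphi)=\varphi$ (this is the paper's Lemma \ref{lem-centralizer-singular}, proved there by a Radon--Nikodym argument). But both substantive implications are gapped exactly where the difficulty sits. In your $(1)\Rightarrow(3)$, pointwise innerness hands you one unitary per state with no a priori relation between the unitaries attached to different states; ``these assemble into an element $U=(w_n)\in M^\omega$'' is asserted, not proved, and the subsequent ``descent from $M^\omega$ to $M$'' is not how any of the cited machinery works. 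The paper's route is $(1)\Rightarrow(2)\Rightarrow(3)$: for $(1)\Rightarrow(2)$ it fixes a $\mathcal G$-singular masa $A\subset M_\psi$ (Theorem \ref{thm-typeIII-G-singular}), uses the measurable selection principle to choose the unitaries $u_p$ with $\theta(\varphi_{a^p})=u_p\varphi_{a^p}u_p^*$ \emph{continuously} in $p$ on a compact set of positive measure, and combines two of the resulting intertwining projections via Lemma \ref{lem-intertwiner-tensor} to obtain $A\preceq_M\theta(A)$, contradicting $\mathcal G$-singularity unless $\theta$ is inner on $M_\psi$; then $(2)\Rightarrow(3)$ is a $2\times2$ matrix trick in which Theorem \ref{thm-bicentralizer} (AHHM18) produces the amenable $\mathrm{II}_1$ subfactor $R$ and hence the specific state $\psi=\tau_R\circ E_R$ to which $(2)$ is applied. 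None of this appears in your sketch, and without it the gluing does not go through.

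Your $(3)\Rightarrow(4)$ has the same character: the assertion that an automorphism fixing $(M^\omega)_{\varphi^\omega}$ pointwise and commuting with $\sigma^\varphi$ ``is identified with a translation in the continuous core'' is precisely the statement to be proved, and it is false that fixing $M_\varphi$ pointwise alone pins down such an identification. The paper's argument introduces the subgroup $H(\theta)=\{n\in\Z\mid\theta^n=\Ad(u)\circ\sigma_t^\varphi\}=k\Z$, reduces to an outer action of $G=\Z/k\Z$, forms $\widetilde M=M\rtimes_\theta G$, verifies $M'\cap C_\varphi(\widetilde M)=\C$, and invokes Marrakchi's Theorem \ref{thm-Amine} to get $\mathrm{BC}_\varphi(M\subset\widetilde M)=\C$, which contradicts $\lambda_1^G\in(M^\omega)_{\varphi^\omega}'\cap\widetilde M$ unless $k=1$. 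So Marrakchi's relative bicentralizer theorem is the engine of $(3)\Rightarrow(4)$, not of the descent step you place it in; as written, both hard implications remain unproved.
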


We emphasize that Theorem \ref{thmA} is applied to all concrete examples of type $\rm III_1$ factors because they have trivial bicentralizers. Although we do not know the complete answer for Connes' bicentralizer problem, if it is solved affirmatively, then our theorem solves Haagerup--St\o rmer's conjecture. Even if the problem has a negative answer, we can apply the theorem to any type $\rm III_1$ factor up to a tensor product with $R_\infty$. Thus, in any case, Theorem \ref{thmA} covers a large class of type $\rm III_1$ factors.

We explain item (2) in Theorem \ref{thmA}. In all previous works for pointwise inner automorphisms, a specific state (or weight) $\varphi$ on a factor $M$ plays a crucial role. More precisely, the implication (1)$\Rightarrow$(2) in Theorem \ref{thmA} for the specific $\psi  \ (=\varphi)$ is the key step of the proof. 
For example, $\varphi$ is a trace if $M$ is of type $\rm II$. If $M$ is a type $\rm III_\lambda$ $(0\leq \lambda <1)$ factor, then $\varphi$ is a lacunary weight. We note that, in these cases, the implication (2)$\Rightarrow$(4) also follows by the fact that  $\varphi$ is a specific one. 
If $M$ is a type III$_1$ factor, unfortunately, there are no such specific states or weights in general, and this fact is the main difficulty for the conjecture. We do have a dominant weight, but it is not enough for our purpose because $M_\varphi \subset M$ is not with expectation. In \cite{HS91,HI22}, $\varphi$ is assumed to be an almost periodic state, but not all type $\rm III_1$ factors admit such a state.

In Theorem \ref{thmA}, we do not assume $\varphi$ is such a specific state or weight. We nevertheless prove the implication (1)$\Rightarrow$(2) holds. This is the main observation of the article, and we need some techniques from \textit{Popa's intertwining theory} \cite{Po01,Po03}. 
Once we get item (2), then since it is the condition for \textit{sufficiently many $\psi$}, we can use recent results from Marrakchi's work on relative bicentralizers \cite{Ma23} and get item (4). 
Thus, our proof is new even for the Araki--Woods factor $R_\infty$. We note that this is why we can avoid the use of the classification theorem of $\Aut(R_\infty)$ \cite{KST89}, which was necessary in all previous works \cite{HS91,HI22} for the type $\rm III_1$ factor case.

We note that, by a result in \cite{AHHM18}, we can involve item (3) of Theorem \ref{thmA}, 
which is a condition on ultraproducts. This condition is interesting in the sense that $\varphi^\omega$ has more information than $\varphi$. Indeed, item (3) is a condition for a single $\varphi$, while item (2) is one for sufficiently many $\psi$.

\subsection*{Acknowledgments}
	The author would like to thank Yuki Arano,  Toshihiko Masuda, and Reiji Tomatsu for fruitful conversations about automorphisms of von Neumann algebras. He also would like to thank Amine Marrakchi for letting him know about the recent work on relative bicentralizers.

\tableofcontents

\section{Preliminaries}
\label{Preliminaries}

%We say that an inclusion $N\subset M$ of von Neumann algebras is \textit{with expectation} if there is a faithful normal conditional expectation from $M$ onto $N$. 
For any von Neumann algebra $M$, the $L^2$-norm with respect to $\varphi\in M_\ast^+$ is denoted by $\| \, \cdot \, \|_{\varphi}$. 
We use the notation  $M_p:=pMp$ for any projection $p\in M \cup M'$.

\subsection*{Tomita--Takesaki theory and Connes cocycles}

	Let $M$ be a von Neumann algebra and $\varphi\in M_\ast^+$ a faithful functional. Throughout the paper, the \textit{modular action} for $\varphi$ is denoted by $\sigma^\varphi\colon \R \actson M$.  The crossed product $M\rtimes_{\sigma^\varphi}\R$ is called the \textit{continuous core} (with respect to $\varphi$) and is denoted by $C_\varphi(M)$. We say that $M$ is a \textit{type $\rm III_1$ factor} if $C_\varphi(M)$ is a factor. The \textit{centralizer algebra} $M_\varphi$ is the fixed point algebra of the modular action $\sigma^\varphi$. 
See \cite{Ta03} for details of these objects.

	Let $\alpha\colon\R \actson M$ be a continuous action and $p \in M$ a nonzero projection. We say that a $\sigma$-strongly continuous map $u\colon \R \to pM$ is a \textit{generalized cocycle for $\alpha$ (with support projection $p$)} if it satisfies
	$$u_{s+t}=u_s \alpha_s(u_t),\quad u_su_s^* = p, \quad u_s^* u_s = \alpha_s(p),\quad \text{for all }s,t\in \R.$$
In this case, by putting $\alpha^u_s(pxp):=u_s\alpha_s(pxp)u_s^*$ for all $x\in M$ and $s\in G$, we have a  continuous $\R$-action on $pMp$.

For $\varphi,\psi\in M_\ast^+$ with $\varphi$ faithful and with $s(\psi)$ the support projection of $\psi$, consider modular actions $\sigma^\varphi$ on $M$ and $\sigma^\psi$ on $M_{s(\psi)}$. 
The \textit{Connes cocycle} $(u_t)_{t\in \R}$ (for $\psi$ with respect to $\varphi$) \cite{Co72} is a generalized cocycle for $\sigma^\varphi$ with support projection $s(\psi)$ such that $(\sigma^\varphi)^{u}\colon \R \actson  M_{s(\psi)}$ coincides with $\sigma^\psi$. We denote it by   $u_t=[D\psi:D\varphi]_t$ for $ t\in \R$. 
See \cite[VIII.3.19-20]{Ta03} for this non-faithful version of the Connes cocycle. 

Let $\theta = \sigma^\varphi_t$ for some $t\in \R$ and we see that it is pointwise inner. Take any $\psi\in M_\ast^+$ and take any faithful $\widetilde{\psi}\in M_\ast^+$ with $s(\psi)\widetilde{\psi}=\widetilde{\psi}s(\psi)=\psi$. Then since $s(\psi)\in M_{\widetilde{\psi}}$ and $\sigma^{\widetilde{\psi}}_t ( \psi ) =  \psi$,  $u_t:=[D\varphi:D\widetilde{\psi}]_t$ satisfies
\begin{align*}
	\theta(\psi)
	= \Ad(u_t)\circ \sigma^{\widetilde{\psi}}_t (s(\psi)\widetilde{\psi})
	= u_t\psi u_t^*.
\end{align*}
Hence $\theta$ is pointwise inner.

By the uniqueness of Connes cocycles, it is straightforward to check that, if $\varphi,\psi$ are faithful,  $v \in M$ a partial isometry with $e:=v v^*\in M_\psi$, $v^*v \in M_\varphi$, and $v\varphi v^* = e\psi e$, then for any $x\in M$ and $t\in \R$,
	$$v\sigma_t^\varphi(v^*xv)v^*=\sigma_t^\psi(exe), \quad  e[D\psi : D\varphi]_t =  [De\psi e: D\varphi]_t=v\sigma_t^\varphi(v^*).$$
We prove two lemmas.

\begin{lem}\label{lem-RN-irreducible2}
	Let $N\subset M$ be an inclusion of $\sigma$-finite von Neumann algebras with expectation $E_N$. Let $\varphi_N\in N_\ast$ be any faithful tracial state and put $\varphi:=\varphi_N\circ E_N$. Let $p,q\in N$ be projections such that $N_p'\cap M_p\subset N_p$. Assume that there is a $\ast$-isomorphism $\theta\colon M_p \to M_q$ such that $\theta(N_p) = N_q$. We write $\theta_N:=\theta|_{N_p}$. 
\begin{enumerate}
	\item We have $\theta^{-1}\circ E_N = E_N\circ \theta^{-1}$ on $M_q$. In particular, 
	$$[Dq\theta(p\varphi p)q:D\varphi]_t = [Dq\theta_N(p\varphi_N p)q :D\varphi_N ]_t \in N,\quad \text{for all }t\in \R.$$

	\item There is a unique nosingular, positive, self-adjoint element $h$, which is affiliated with $N_q$, such that $q\theta(p\varphi p)q= \varphi_h $.

	\item Assume that $\theta = \Ad(v)\circ \theta'$ for some $\ast$-isomorphism $\theta'\colon pMp\to rMr$, where $r=\theta'(p)$, and for some partial isometry $v\in qMr$. Then $h$ in item $(2)$ satisfies
\begin{align*}
	 h^{\ri t} =  v[D r\theta'(p\varphi p)r : D\varphi]_t \sigma_t^{\varphi}(v^*),\quad \text{for all }t\in \R.
\end{align*}

\end{enumerate}
\end{lem}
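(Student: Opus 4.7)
The plan is to prove (1) first; items (2) and (3) then reduce to short computations. For (1) the key step is to show that $\Phi := \theta \circ E_N|_{M_p} \circ \theta^{-1}$ coincides with $E_N|_{M_q}$ as faithful normal conditional expectations $M_q \to N_q$. I would compare the two states on $M_q$ given by $\omega_1 := q\varphi_N q \circ \Phi$ and $\omega_2 := q\varphi q = q\varphi_N q \circ E_N|_{M_q}$. Both restrict to $q\varphi_N q$ on $N_q$, and because $\varphi_N$ is a trace, Takesaki's theorem forces $\sigma_t^{\omega_i}|_{N_q} = \id$, so $N_q$ lies in each centralizer. The Connes cocycle $u_t := [D\omega_1 : D\omega_2]_t \in M_q$ must then commute with $N_q$, and transporting the irreducibility hypothesis $N_p' \cap M_p \subset N_p$ across the $\ast$-isomorphism $\theta$ gives $N_q' \cap M_q \subset N_q$, so $u_t \in N_q$. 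Viewed inside $N_q$, $u_t$ is the Connes cocycle between $q\varphi_N q$ and itself, hence $u_t = q$, giving $\omega_1 = \omega_2$. Testing $q\varphi_N q(n\Phi(x)) = q\varphi_N q(nE_N(x))$ (for $n \in N_q$, $x \in M_q$) at $n = (\Phi(x) - E_N(x))^*$ and using faithfulness of $\varphi_N|_{N_q}$ upgrades this to $\Phi = E_N|_{M_q}$. The ``in particular'' formula then drops out: item (1) implies $\theta(p\varphi p) \circ E_N = \theta(p\varphi p)$, so by the standard behaviour of Connes cocycles under an $E_N$-invariant extension of a tracial reference state, $[Dq\theta(p\varphi p)q : D\varphi]_t$ lies in $N$ and coincides with $[Dq\theta_N(p\varphi_N p)q : D\varphi_N]_t$.

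For (2), the tracial Radon--Nikodym theorem in $(N, \varphi_N)$ applied to the positive normal functional $q\theta_N(p\varphi_N p)q$ on $N$ (whose support lies under $q$) produces a unique positive self-adjoint $h$ affiliated with $N_q$ with $q\theta_N(p\varphi_N p)q = \varphi_N(h\,\cdot\,)$. Since $h$ is affiliated with $N$, one has $\varphi_N(h E_N(\cdot)) = \varphi_N(E_N(h\,\cdot\,)) = \varphi(h\,\cdot\,) = \varphi_h$, and by (1) this equals $q\theta(p\varphi p)q$.

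For (3), set $\chi := r\theta'(p\varphi p)r$, so $q\theta(p\varphi p)q = v\chi v^*$ using $vv^* = q$ and $v^*v = r$. The candidate
\[
u_t := v\,[D\chi : D\varphi]_t\,\sigma_t^\varphi(v^*)
\]
is a generalized $\sigma^\varphi$-cocycle with support projection $q$, as one checks directly from the cocycle identity for $[D\chi : D\varphi]_t$ together with $v^*v = r$ and $vv^* = q$. To identify $u_t$ with $[Dv\chi v^* : D\varphi]_t$ it suffices to verify that $u_t$ implements the modular automorphism of $v\chi v^*$; this follows from the transport identity $\sigma_t^{v\chi v^*}(x) = v\,\sigma_t^\chi(v^*xv)\,v^*$ on $qMq$, combined with $\sigma_t^\chi(y) = [D\chi:D\varphi]_t\,\sigma_t^\varphi(y)\,[D\chi:D\varphi]_t^*$ on $rMr$. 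Uniqueness of Connes cocycles then yields $u_t = h^{\ri t}$. The main difficulty is therefore concentrated in item (1), where the cocycle argument uses the irreducibility hypothesis $N_p' \cap M_p \subset N_p$ in an essential way.
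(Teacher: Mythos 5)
Your proposal is correct, and its overall architecture matches the paper's: the heart of the matter is showing $\theta\circ E_N|_{M_p}\circ\theta^{-1}=E_N|_{M_q}$, after which (2) is the tracial Radon--Nikodym theorem and (3) is a cocycle identification. The difference is in how you discharge two standard facts that the paper simply cites. For (1), the paper observes that $\theta\circ E_N\circ\theta^{-1}$ is a normal conditional expectation onto the irreducible subalgebra $N_q\subset M_q$ and invokes the uniqueness of such expectations; you instead re-derive this uniqueness (in the tracial case) by hand, showing the Connes cocycle $[D\omega_1:D\omega_2]_t$ commutes with $N_q$, lands in $N_q$ by irreducibility, and is trivial because both restrictions to $N_q$ are the same trace --- a correct, self-contained argument, at the cost of a slightly glossed step (that a cocycle lying in $N_q$ identifies with the cocycle of the restricted states; this is where one writes $u_t=k^{\ri t}$ with $k$ affiliated with $N_q$ and restricts). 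For (3), the paper computes $[Dv\theta'(\varphi)v^*:D\varphi]_t$ by the chain rule through an auxiliary faithful $\psi$ with $r\psi r=r\theta'(\varphi)r$, whereas you verify directly that $v[D\chi:D\varphi]_t\sigma_t^\varphi(v^*)$ satisfies the defining properties of the generalized cocycle for $v\chi v^*$ (support projections, cocycle identity, implementation of the modular flow) and conclude by uniqueness. Both routes are valid; the paper's is shorter given the cited background, yours is more self-contained.
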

\begin{proof}
	Recall that a normal conditional expectation from $M$ onto $N$ is unique if $N\subset M$ is irreducible. 
Observe that $N_p\subset M_p$ and $N_q = \theta(N_p)\subset \theta(M_p)=M_q$ are irreducible. 
In the proof below, for notation simplicity, we sometimes omit $p$, such as $q\theta(\varphi)q = q\theta(p\varphi p)q$.

	(1) Since $ \theta\circ E_N \circ \theta^{-1}\colon M_q \to N_q $ is a normal conditional expectation, it coincides with $E_N|_{M_q}$ by the uniqueness. We get $\theta_N^{-1}\circ E_N = E_N\circ \theta^{-1}$ on $M_q$, and hence 
	$$q\theta(\varphi)q = q( \varphi_N\circ E_N\circ\theta^{-1})q = q(\varphi_N \circ \theta_N^{-1} \circ E_N )q=q( \theta_N(\varphi_N) \circ E_N)q.$$
Take any faithful $\psi_N\in N_\ast^+$ with $q \psi_N q = q\theta(\varphi_N)q$. Then by \cite[Lemma 1.4.4]{Co72}, for any $t\in \R$,
 \begin{align*}
	[Dq\theta(\varphi)q : D\varphi]_t 
	&	= q [D\psi_N \circ E_N : D\varphi _N\circ E_N ]_t \\
	&=q [D\psi_N : D\varphi_N ]_t 
	= [Dq\theta_N(\varphi_N)q : D\varphi _N ]_t\in N.
\end{align*}

	(2) Since $\varphi_{N}$ is a faithful trace on $N$, there is a unique $h$, affiliated with $N_q$, such that $q\theta_N(\varphi_N)q = (\varphi_{N})_h$ on $N_q$. Combined with (1), for any $t\in \R$, we have for any $t\in \R$, 
	$$[Dq\theta(\varphi)q :D\varphi]_t=[Dq\theta_N(\varphi_N)q :D\varphi_N ]_t = h^{\ri t}.$$
Since $h$ is affiliated with $M_q$, we also have $h^{\ri t} = [D\varphi_h:D\varphi]_t$, hence we get $q\theta(\varphi)q =\varphi_h$.

	(3) Let $\psi\in M_\ast^+$ be any faithful element such that $r\psi r = r\theta'(\varphi)r$. Then for any $t\in \R$,
\begin{align*}
	h^{\ri t} 
	&= [Dq\theta(\varphi)q: D\varphi]_t 
	=  [D v\theta'(\varphi)v^*: D\varphi]_t\\
	&= [D v\psi v^*: D\psi]_t[D \psi: D\varphi]_t\\
	&= v\sigma_t^{\psi}(v^*)[D \psi: D\varphi]_t\\
	&= v[D \psi: D\varphi]_t \sigma_t^{\varphi}(v^*)
	= v[D r\psi r : D\varphi]_t \sigma_t^{\varphi}(v^*).
\end{align*}
This is the conclusion.
\end{proof}

\begin{lem}\label{lem-centralizer-singular}
	Let $M$ be a factor with a faithful state $\varphi \in M_\ast$ such that $M_\varphi' \cap M =\C$. If $\theta\in \Aut(M)$ satisfies $\theta(M_\varphi)=M_\varphi$, then $\theta(\varphi)=\varphi$. 
In particular, $M_\varphi \subset M$ is singular in the sense that $uM_\varphi u^* = M_\varphi$ for $u\in \mathcal U(M)$ implies $u\in M_\varphi$. 
\end{lem}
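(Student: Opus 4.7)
The plan is to reduce $\theta(\varphi)=\varphi$ to the vanishing of a single Connes cocycle, which will follow almost for free from Lemma~\ref{lem-RN-irreducible2} once I feed it the correct data.

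First I would set up the input for that lemma with $N:=M_\varphi$ and $p=q=1$. The hypothesis $M_\varphi'\cap M=\C$ implies $Z(M_\varphi)=\C$, so $M_\varphi$ is a factor, and by the defining property of the centralizer, $\varphi_N:=\varphi|_{M_\varphi}$ is a faithful normal tracial state. Hence $M_\varphi$ is a finite factor (type $\rm I_n$ or $\rm II_1$) with a \emph{unique} faithful normal tracial state. Because $M_\varphi$ is the fixed point algebra of $\sigma^\varphi$, Takesaki's theorem furnishes a faithful normal conditional expectation $E_N\colon M\to M_\varphi$ with $\varphi=\varphi_N\circ E_N$. The irreducibility assumption of Lemma~\ref{lem-RN-irreducible2}, namely $M_\varphi'\cap M\subset M_\varphi$, is immediate from the hypothesis $M_\varphi'\cap M=\C$.

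The key observation is that the restricted automorphism $\theta_N:=\theta|_{M_\varphi}$ automatically preserves the trace: $\theta_N(\varphi_N)$ is again a faithful normal tracial state on the finite factor $M_\varphi$, hence equals $\varphi_N$ by uniqueness. Lemma~\ref{lem-RN-irreducible2}(1) then yields
\begin{align*}
	[D\theta(\varphi):D\varphi]_t \;=\; [D\theta_N(\varphi_N):D\varphi_N]_t \;=\; 1, \qquad t\in\R,
\end{align*}
and the uniqueness of Connes cocycles forces $\theta(\varphi)=\varphi$, proving the first assertion.

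For the singularity statement, I would apply the first part to $\theta=\Ad(u)$ for any $u\in\mathcal U(M)$ with $uM_\varphi u^*=M_\varphi$. This produces $u\varphi u^*=\varphi$, i.e.\ $\varphi$ is $\Ad(u)$-invariant. The standard cocycle identity $[Du\varphi u^*:D\varphi]_t = u\,\sigma_t^\varphi(u^*)$ then gives $\sigma_t^\varphi(u)=u$ for all $t\in\R$, hence $u\in M_\varphi$. I do not anticipate any serious obstacle; the only point requiring a moment of care is invoking Takesaki's theorem to guarantee the conditional expectation onto the centralizer, so that Lemma~\ref{lem-RN-irreducible2} is actually applicable.
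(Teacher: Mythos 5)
Your proposal is correct and follows essentially the same route as the paper: both apply Lemma \ref{lem-RN-irreducible2} with $p=q=1$ to the inclusion $M_\varphi\subset M$ and reduce everything to the uniqueness of the tracial state on the finite factor $M_\varphi$. The only cosmetic difference is that you invoke item (1) (triviality of the Connes cocycle) where the paper invokes item (2) (getting $\theta(\varphi)=\varphi_h$ and deducing $h=1$), which amounts to the same computation.
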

\begin{proof}
	Put $N:=M_\varphi$ and $\varphi_N:=\varphi|_N$. We apply Lemma \ref{lem-RN-irreducible2}(2) for the case $p=q=1$, and take $h$ such that $\theta(\varphi) = \varphi_h$. Since $h$ is affiliated with $N$, we have $\varphi_h|_N =(\varphi_N)_h$. Since $N$ is a finite factor and since $\theta(\varphi)|_N$ is a trace, we have $\theta(\varphi)|_N  = \varphi_N$ by the uniqueness of the trace. We get $(\varphi_N)_h = \varphi_N$ and $h=1$. This means $\theta(\varphi) = \varphi$. 

If  $uM_\varphi u^* = M_\varphi$ for $u\in \mathcal U(M)$, we can apply the first part of the lemma to $\Ad(u)\in \Aut(M)$. We get $\Ad(u)(\varphi)=\varphi$ and hence $u$ is contained in $M_\varphi$. 
\end{proof}

\subsection*{Ultraproduct von Neumann algebras}

	Let $M$ be any $\sigma$-finite von Neumann algebra and $\omega$ any free ultrafilter on $\N$. Put 
\begin{align*}
	\mathcal I_{\omega} &= \left\{ (x_n)_{n} \in \ell^\infty(\N, M) \mid x_n \to 0 \text{ $\ast$-strongly as } n \to \omega \right\} ;\\
	\mathcal M^{\omega} &= \left \{ x \in \ell^\infty(\N, M) \mid  x \mathcal I_{\omega} \subset \mathcal I_{\omega} \text{ and } \mathcal I_{\omega}x \subset \mathcal I_{\omega}\right\};
\end{align*}
where $\ell^\infty(\N, M)$ is the set of all norm bounded sequences in $M$. 
The \textit{ultraproduct von Neumann algebra} \cite{Oc85} is defined as the quotient C$^*$-algebra $M^\omega := \mathcal{M}^\omega/ \mathcal{I}_\omega$, which naturally admits a von Neumann algebra structure. The image of $(x_n)_n$ in $M^\omega$ is denoted by $(x_n)_\omega$. 
We have an embedding $M \subset M^\omega$ as a constant sequence, and this inclusion is with expectation $E_\omega$ given by $E_\omega( (x_n)_\omega)=\sigma\text{-weak}\lim_{n\to \omega}x_n$. For any faithful $\varphi\in M_*^+$, we can define a faithful positive functional $\varphi^\omega:=\varphi\circ E_\omega$ on $M^\omega$. 

Any $\theta\in \Aut(M)$ induces $\theta^\omega\in \Aut(M^\omega)$ by the equation $ \theta^\omega( (x_n)_\omega ) = (\theta(x_n))_\omega $. In particular, the modular action of $\varphi^\omega$ is given by $\sigma^{\varphi^\omega}_t = (\sigma^\varphi_t)^\omega$ for all $t\in \R$ \cite{AH12}. 
For more on ultraproduct von Neumann algebras, we refer the reader to \cite{Oc85,AH12}.

\subsection*{Relative bicentralizer algebras}

Let $N\subset M$ be an inclusion of von Neumann algebras with separable predual and with expectation $E_N$. Let $\varphi\in N_\ast$ be any faithful state and extend it on $M$ by $E_N$. 
	We define the \textit{asymptotic centralizers} and the \textit{relative bicentralizer algebra} (with respect to $\varphi$) as
\begin{align*}
	&\mathrm{AC}_\varphi(N):= \{ (x_n)_n\in \ell^\infty(\N,N) \mid \lim_{n\to \infty}\|x_n\varphi - \varphi x_n\| = 0  \};\\
	&\mathrm{BC}_\varphi(N\subset M):= \{x\in M  \mid \lim_{n\to \infty} \|x_n x - x x_n\|_\varphi \to 0 \quad \text{for any }(x_n)_n\in \mathrm{AC}_\varphi(N) \}.
\end{align*}
This does not depend on the choice of $\varphi$, if $N$ is a type $\rm III_1$ factor. In the case $N=M$, we write $\mathrm{BC}_\varphi(M)=\mathrm{BC}_\varphi(N\subset M)$ and we call it the \textit{bicentralizer}. Then Connes' bicentralizer problem asks if any type $\rm III_1$ factor $M$ with separable predual has trivial bicentralizer, that is, $\mathrm{BC_\varphi}(M)=\C$. This condition is equivalent to having a faithful state $\varphi\in M_\ast$ such that $M_\varphi'\cap M=\C$ \cite{Ha85}. Here is a convenient expression by ultraproducts \cite[Proposition 3.3]{HI15}\cite[Proposition 3.3]{AHHM18}:
	$$ \mathrm{BC}_\varphi(N\subset M) = (N^\omega)_{\varphi^\omega} '\cap M, $$
where $\omega$ is any fixed free ultrafilter on $\N$.

The next theorem explains the importance of relative bicentralizers. This should be understood as a type III counterpart of Popa's work \cite{Po81}.

\begin{thm}[{\cite{Ha85}\cite[Theorem C]{AHHM18}}]\label{thm-bicentralizer}
	Assume that $N$ is a type $\rm III_1$ factor. If $\mathrm{BC}_\varphi(N\subset M) = N'\cap M$, then there exists an amenable subfactor $R \subset N$ of type $\rm II_1$ with expectation such that $R'\cap M =N'\cap M$. 
\end{thm}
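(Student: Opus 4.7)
The plan is to exploit the ultraproduct reformulation stated just before the theorem: the hypothesis $\mathrm{BC}_\varphi(N\subset M) = N'\cap M$ becomes
\[
(N^\omega)_{\varphi^\omega}'\cap M \;=\; N'\cap M.
\]
Since $N$ is a type $\rm III_1$ factor with separable predual, the centralizer $(N^\omega)_{\varphi^\omega}$ is known to be a type $\rm II_1$ factor containing copies of every finite-dimensional matrix algebra (this is the Ando--Haagerup analysis of ultraproduct centralizers, and is what makes the type $\rm III_1$ assumption essential). The strategy, inspired by Haagerup's original argument for the absolute bicentralizer case $N=M$, is to pick out a hyperfinite $\rm II_1$ subfactor of $(N^\omega)_{\varphi^\omega}$ and push it down into $N$ by a reindexing procedure, using the hypothesis to force its relative commutant in $M$ to be exactly $N'\cap M$.

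First I would construct inside $(N^\omega)_{\varphi^\omega}$ an increasing tower $A_1 \subset A_2 \subset \cdots$ of unital matrix subalgebras with $A_n \cong \M_{k_n}(\C)$ whose $\sigma$-strong closure is a copy of the hyperfinite $\rm II_1$ factor. Representing the matrix units of each $A_n$ by bounded sequences in $N$ that asymptotically commute with $\varphi$, and applying an Ando--Haagerup type perturbation together with a diagonal/reindexing argument, I can extract from these doubly-indexed approximations genuine $\varphi$-invariant matrix subalgebras $B_1 \subset B_2 \subset \cdots \subset N$ with $B_n \cong A_n$. Setting $R := \overline{\bigcup_n B_n}$ (closure in the $\sigma$-strong topology) yields a hyperfinite, hence amenable, $\rm II_1$ subfactor of $N$, and the $\varphi$-preserving nature of the $B_n$ supplies a faithful normal conditional expectation $N \to R$, so that $R \subset N$ is automatically with expectation.

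The main obstacle is verifying $R'\cap M = N'\cap M$. The inclusion $\supset$ is immediate. For the reverse, take $m\in R'\cap M$. Viewing $R$ as a constant sequence inside $N^\omega$, the reindexing should be arranged so that $R$, perhaps together with the original lifts of the $A_n$'s in $N^\omega$, generates a weakly dense $\ast$-subalgebra of $(N^\omega)_{\varphi^\omega}$; then $m$ must commute with all of $(N^\omega)_{\varphi^\omega}$ and hence belong to $(N^\omega)_{\varphi^\omega}'\cap M = N'\cap M$ by hypothesis. I expect this last step to be the hardest part: one must choose the diagonal subsequence carefully enough that the image of $R$ inside $(N^\omega)_{\varphi^\omega}$ is sufficiently rich on the relevant vectors, and this is precisely where Haagerup's spectral-gap / approximate-matrix-unit machinery, combined with the type $\rm III_1$ hypothesis on $N$, enters in an essential way.
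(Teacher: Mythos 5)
The paper does not prove this theorem itself; it quotes it from [Ha85] and [AHHM18, Theorem C], so your proposal has to be measured against those proofs. Your overall architecture --- rewrite the hypothesis as $(N^\omega)_{\varphi^\omega}'\cap M = N'\cap M$, use that $(N^\omega)_{\varphi^\omega}$ is a type $\rm II_1$ factor because $N$ is of type $\rm III_1$, lift matrix units from the ultrapower centralizer into $N$ by perturbation and reindexing to get an increasing tower $B_1\subset B_2\subset\cdots\subset N$, and set $R$ equal to the generated hyperfinite $\rm II_1$ subfactor with expectation --- is indeed the strategy of those references.

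The gap is in the final step, which you yourself flag as the hardest, and the mechanism you propose there cannot work. You want to arrange the diagonal subsequence so that the constant-sequence copy of $R$, together with the lifts of the $A_n$, generates a weakly dense $\ast$-subalgebra of $(N^\omega)_{\varphi^\omega}$, and then conclude $R'\cap M\subset (N^\omega)_{\varphi^\omega}'\cap M$. But $(N^\omega)_{\varphi^\omega}$ is a $\rm II_1$ factor with non-separable predual, while $R$ and countably many lifts generate a von Neumann algebra with separable predual; no such subalgebra can be weakly dense, however carefully the reindexing is done. Density inside the ultrapower centralizer is not the right target. What actually closes the argument in [Ha85] and [AHHM18] is a Popa-style averaging at every finite stage: fix a $\|\cdot\|_\varphi$-dense sequence $(x_k)_k$ in the unit ball of $M$ and, when passing from $B_{n-1}$ to $B_n$, average over unitaries of the asymptotic centralizer $\mathrm{AC}_\varphi(N)$ (equivalently of $(N^\omega)_{\varphi^\omega}$) to choose the new matrix-unit system so that
$$\| E_{B_n'\cap M}(x_k) - E_{\mathrm{BC}_\varphi(N\subset M)}(x_k)\|_\varphi \le \tfrac1n, \quad k\le n.$$
Since $B_n'\cap M$ decreases to $R'\cap M$, martingale convergence of the conditional expectations then yields $E_{R'\cap M}=E_{N'\cap M}$ on a dense set, hence $R'\cap M=N'\cap M$. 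This quantitative control of the relative commutants at each finite stage --- the same mechanism as condition $\rm (P3)$ in the proof of Theorem \ref{thm-typeIII-G-singular} in the appendix --- is the essential content you are missing; as written, your proof does not close.
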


Very recently, Marrakchi obtained very general and useful criteria for computations of relative bicentralizers. We will need the following statement.

\begin{thm}[{\cite[Theorem E(5)]{Ma23}}]\label{thm-Amine}
	Assume that $N$ is a type $\rm III_1$ factor. Assume that $N$ has trivial bicentralizer and that $N\subset M$ is regular. If $N'\cap C_\varphi(M)\subset C_\varphi(N)$, then we have $\mathrm{BC}_\varphi(N\subset M)=\C$.
\end{thm}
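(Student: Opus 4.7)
The plan is to translate the bicentralizer condition, which is phrased at the ultraproduct level, into a statement at the level of the continuous core where the hypothesis $N'\cap C_\varphi(M)\subset C_\varphi(N)$ can bite. Since $N$ has trivial bicentralizer I would fix a faithful state $\psi\in N_\ast$ with $N_\psi'\cap N=\C$, extend it to $M$ by the expectation $E_N$, and use the ultraproduct formula $\mathrm{BC}_\varphi(N\subset M)=(N^\omega)_{\psi^\omega}'\cap M$. Applying Theorem \ref{thm-bicentralizer} to $N$ itself provides an amenable type $\mathrm{II}_1$ subfactor $R\subset N$ with expectation and with $R'\cap N=\C$, whose tracial structure is convenient for controlling Connes cocycles in $N$.

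Let $x\in\mathrm{BC}_\varphi(N\subset M)$. By the regularity of $N\subset M$ it suffices to show that $x$ commutes with every $u\in\mathcal N_M(N)$, so that $x\in M'\cap M=\C$. For such a $u$, the state $u\psi u^*$ is again a faithful state on $N$, and the Connes cocycle $w_t:=[Du\psi u^*:D\psi]_t$ lies in $N$ by Lemma \ref{lem-RN-irreducible2}(1) applied to the inclusion $N_\psi\subset N$. I would then consider the $\sigma$-strongly continuous family $U_u(t):=uw_t^*\lambda_t\in C_\psi(M)$ and verify that it implements $\sigma^\psi|_N$ after conjugation, so that the commutator of $x$ with the $U_u(t)$'s is forced to live in $N'\cap C_\psi(M)$.

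Once that is established, the hypothesis $N'\cap C_\psi(M)\subset C_\psi(N)$ puts the commutator inside the sub-core. A dual-flow/trace averaging argument on $C_\psi(N)$, combined with the fact that $x$ already commutes with $N_\psi$ (since $x\in (N^\omega)_{\psi^\omega}'\cap M$ and $N_\psi\subset (N^\omega)_{\psi^\omega}$), should collapse the commutator to zero, i.e.\ force $uxu^*=x$. Running over all $u\in\mathcal N_M(N)$ and using regularity then yields $x\in\C$, which proves $\mathrm{BC}_\varphi(N\subset M)=\C$.

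The main obstacle is the middle step: ensuring that $U_u(\cdot)$, or the right variant of it, yields a commutator with $x$ that genuinely belongs to $N'\cap C_\psi(M)$. The naive commutator of $x$ with $u$ lies only in $M$, while the naive commutator of $x$ with $\lambda_t$ merely encodes the modular flow on $x$; combining them via $w_t$ so that the outcome commutes with all of $N$ demands careful use of the cocycle identity $w_{s+t}=w_s\sigma^\psi_s(w_t)$ together with the bicentralizer property of $x$ on the asymptotic centralizer. Trivial bicentralizer of $N$ is used twice: first, to guarantee that $\psi$ with $N_\psi'\cap N=\C$ exists at all, and second, at the end, where residual ambiguities in the construction are eliminated by the identity $\mathrm{BC}_\psi(N)=\C$.
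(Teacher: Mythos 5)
There is a genuine gap, and you have flagged it yourself: the ``middle step'' is not a deferrable technicality but the entire content of the theorem. Note that the statement is a quoted result of Marrakchi, and the paper's own proof is a two-line deduction from \cite[Theorem E(5)]{Ma23}, which computes the relative bicentralizer at the level of the core: $\mathrm{BC}_\varphi(N\subset C_\varphi(M))=L\R\vee(N'\cap C_\varphi(M))$. Under the hypothesis this is contained in $C_\varphi(N)$, and intersecting with $M$ gives $\mathrm{BC}_\varphi(N\subset M)\subset M\cap C_\varphi(N)=N$, whence $\mathrm{BC}_\varphi(N\subset M)\subset\mathrm{BC}_\varphi(N)=\C$. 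Your proposal instead tries to reprove the core-level computation from scratch, and the step you leave open --- that the commutator of $x$ with $U_u(t)=uw_t^*\lambda_t$ lands in $N'\cap C_\psi(M)$ --- is exactly where all the difficulty sits. An element $x\in(N^\omega)_{\psi^\omega}'\cap M$ commutes only with the asymptotic centralizer of $\psi$ in $N$, not with $N$ itself, so $[x,U_u(t)]$ has no visible reason to commute with $N$. Indeed, already for $u\in\mathcal U(N)\subset\mathcal N_M(N)$ your plan demands $uxu^*=x$, i.e.\ $\mathrm{BC}_\varphi(N\subset M)\subset N'\cap M$; this inclusion fails for general inclusions and is precisely the relative bicentralizer problem, so the core hypothesis must do real work even in that sub-case, and the sketch supplies no mechanism for it. Marrakchi's proof of Theorem E(5) goes through the structure theory of the bicentralizer flow and is substantially harder than a cocycle-plus-averaging computation.

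Two secondary points. First, you replace $\varphi$ by a state $\psi$ with $N_\psi'\cap N=\C$ and then invoke the hypothesis in the form $N'\cap C_\psi(M)\subset C_\psi(N)$; this is harmless (the cores built from states extended through $E_N$ are canonically isomorphic, and the relative bicentralizer of a type $\rm III_1$ subfactor is state-independent), but it needs to be said. Second, the amenable subfactor $R$ you extract from Theorem \ref{thm-bicentralizer} is never used afterwards. If you want an argument at the level of detail of this paper, the efficient route is the one the paper takes: quote the core-level identity from \cite{Ma23} and intersect with $M$.
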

\begin{proof}
	By \cite[Theorem E(5)]{Ma23}, we have
	$$\mathrm{BC}_\varphi(N\subset C_\varphi(M)) = L\R\vee (N'\cap C_\varphi(M))\subset C_\varphi(N),$$
where the relative bicentralizer algebra is defined for arbitrary inclusions. This implies
	$$\mathrm{BC}_\varphi(N\subset M) =M\cap \mathrm{BC}_\varphi(N\subset C_\varphi(M))\subset M\cap C_\varphi(N) = N.$$
We get $\mathrm{BC}_\varphi(N\subset M) \subset  \mathrm{BC}_\varphi(N) = \C$.
\end{proof}

\subsection*{Popa's intertwining theory}

We recall Popa's intertwining theory \cite{Po01,Po03}. 
For this, we fix a $\sigma$-finite von Neumann algebra $M$ and a finite von Neumann subalgebra $B\subset M$ with expectation $E_B$. Fix a trace $\tau_B$ on $B$. We can define a canonical trace $\Tr$ on the basic construction $\langle M, B\rangle$ satisfying $\Tr( xe_By) = \tau_B\circ E_B(yx)$ for $x,y\in M$ (see for example, \cite[Section 4]{HI15}). 

In this setting, we have the following equivalence. For the proof of this theorem, we refer the reader to \cite[Theorem 4.3]{HI15}. 

\begin{thm}[{\cite{Po01,Po03}}]\label{thm-intertwining}
	Keep the setting. For any finite von Neumann subalgebra $A\subset M$ with expectation, the following conditions are equivalent. 
	\begin{enumerate}
		\item We have $A \preceq_M B$. This means that there exist projections $e\in A$, $f\in B$, a partial isometry $v\in eMf$ and a unital normal $\ast$-homomorphism $\theta\colon eAe \to fBf$ such that $v\theta(a)= av$ for all $a\in eAe$. 

		\item There exists no nets $(u_i)_{i}$ of unitaries in $\mathcal U(A)$ such that for any $a,b\in M$,
	$$\|E_B(b^* u_i a )\|_{\tau_B} \rightarrow 0, \quad \text{as} \quad i\to \infty.$$

		\item There exists a nonzero positive element $d\in A' \cap \langle M, B\rangle$ such that $\Tr(d)\in M$. 

\end{enumerate}
\end{thm}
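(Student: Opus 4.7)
The plan is to establish $(1) \Leftrightarrow (3)$ and $(2) \Leftrightarrow (3)$, with $A'\cap\langle M,B\rangle$ serving as the pivot linking the intertwiner and averaging pictures. The two substantive implications are $(3) \Rightarrow (1)$ and $(2) \Rightarrow (3)$; the implication $(1) \Rightarrow (3)$ is a direct construction, and $(3) \Rightarrow (2)$ is a short trace-identity argument.

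For $(1) \Rightarrow (3)$, I would set $d_0:=ve_Bv^*$ from the partial isometry in $(1)$. The intertwining identity $v\theta(a)=av$ for $a\in eAe$ forces $d_0$ to commute with $eAe$, and a direct trace computation gives $\Tr(d_0)=\tau_B(v^*v)<\infty$. To upgrade commutation to all of $A$, take a finite family of partial isometries $\{w_k\}\subset A$ with mutually orthogonal range projections $\{w_kw_k^*\}$ summing to the central support of $e$ in $A$ and with $w_k^*w_k\leq e$; the element $d:=\sum_k w_k d_0 w_k^*$ then lies in $A'\cap\langle M,B\rangle^+$ and still has finite trace because $A$ is finite. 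Conversely, $(3) \Rightarrow (1)$ is the geometric core: starting from a nonzero $d\in A'\cap\langle M,B\rangle^+$ with $\Tr(d)<\infty$, pass to a finite-trace spectral projection $p:=\mathbf{1}_{[c,\infty)}(d)\in A'\cap\langle M,B\rangle$. Using the bijection between finite-trace projections in $\langle M,B\rangle$ and finitely generated right-$B$-submodules of $L^2(M)$, the algebra $A$ acts on $pL^2(M)$ by bounded right-$B$-linear operators, defining a normal $\ast$-homomorphism $A\to\mathrm{End}_B(pL^2(M))\cong f_0 M_n(B)f_0$. A polar decomposition of the induced $B$-module embedding delivers the cutting projections $e\in A$, $f\in B$, the partial isometry $v\in eMf$, and the unital $\theta:eAe\to fBf$ realizing $(1)$.

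For the averaging equivalence, $(3) \Rightarrow (2)$ proceeds as follows: $\Tr(d\,ue_Bu^*)=\Tr(u^*du\,e_B)=\Tr(de_B)$ is constant in $u\in\mathcal U(A)$ since $d$ commutes with $A$. Expanding $d$ in $L^2(\langle M,B\rangle,\Tr)$ via the dense family $\{xe_By^*:x,y\in M\}$ and applying the Jones identity $e_Bye_B=E_B(y)e_B$ converts the constant $\Tr(d\,u_ie_Bu_i^*)$ into a sum of the form $\sum_k\|E_B(u_i^*x_k)\|_{\tau_B}^2$, which tends to $0$ by the hypothesis of $(2)$; after a small adjustment to ensure $\Tr(de_B)>0$, this contradicts the constancy. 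Conversely, for $(2) \Rightarrow (3)$, I would apply a Dixmier-type averaging: the weakly closed convex hull $\mathcal C:=\overline{\mathrm{conv}}^{\,\mathrm{w.o.t.}}\{ue_Bu^*:u\in\mathcal U(A)\}$ consists of elements of constant trace $\tau_B(1)$, hence sits inside $L^2(\langle M,B\rangle,\Tr)$. The unique element $d\in\mathcal C$ of minimal $L^2$-norm is $A$-central by $\mathcal U(A)$-invariance of $\mathcal C$ and uniqueness of the minimizer. If $d=0$, then convex combinations $\sum_j\lambda_ju_je_Bu_j^*$ of arbitrarily small $L^2$-norm exist; expanding the square yields
\[
\sum_{j,k}\lambda_j\lambda_k\|E_B(u_j^*u_k)\|_{\tau_B}^2<\epsilon^2,
\]
and a random-choice extraction, selecting $u_j$ with probability $\lambda_j$, produces a single net of unitaries contradicting $(2)$.

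The hardest step is the extraction in $(2) \Rightarrow (3)$: converting pairwise smallness over $(u_j,u_k)$ into a single net $(u_i)$ satisfying $\|E_B(b^*u_ia)\|_{\tau_B}\to 0$ for \emph{every} $a,b\in M$ requires a diagonal argument over a countable dense subset of $M$, together with the density of $\mathrm{span}(\mathcal U(A)e_B\mathcal U(A))$ in $L^2(\langle M,B\rangle,\Tr)$. A secondary technicality is the $B$-module identification in $(3) \Rightarrow (1)$, which in the $\sigma$-finite (non-tracial) setting for $M$ requires the correct normalization of $\Tr$ on the basic construction built from $E_B$ and $\tau_B$ as in the standard framework of \cite[Section 4]{HI15}; modulo this setup, the module picture and polar decomposition arguments are routine.
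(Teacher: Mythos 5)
The paper itself does not prove this theorem: it explicitly defers to \cite[Theorem 4.3]{HI15}. So your sketch must be measured against the standard argument from the literature, whose overall architecture — the finite-trace-projection/right-$B$-module correspondence for $(3)\Rightarrow(1)$, the element $ve_Bv^*$ for $(1)\Rightarrow(3)$, and the minimal-norm element of a $\mathcal U(A)$-convex hull for the unitary condition — you do reproduce in outline. (Also, ``$\Tr(d)\in M$'' in the statement is a typo for ``$\Tr(d)<\infty$'', which you have correctly read.)

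There is, however, a genuine gap in $(2)\Rightarrow(3)$, and it is not merely a missing technicality. You average only the single element $e_B$: you form $\mathcal C=\overline{\mathrm{conv}}^{\,w}\{ue_Bu^*:u\in\mathcal U(A)\}$, take its minimal-norm element $d$, and claim that $d=0$ contradicts $(2)$. But $d=0$ is perfectly consistent with $(2)$ (equivalently, with $A\preceq_MB$), so no correct argument can extract a contradiction from it. Concretely, let $M=L(\F_2)=L(\langle g,h\rangle)$, $B=L(\langle g\rangle)$, $A=hBh^*$. Then $A\preceq_MB$ (conjugation by $h$ is an intertwiner), yet for $u_n:=hg^nh^{-1}\in\mathcal U(A)$ one computes $\Tr\bigl(u_ne_Bu_n^*\,u_me_Bu_m^*\bigr)=\|E_B(hg^{m-n}h^{-1})\|_{\tau_B}^2=\delta_{nm}$, so the averages $\tfrac1N\sum_{n\le N}u_ne_Bu_n^*$ tend to $0$ in $\|\cdot\|_{2,\Tr}$ and $d=0$. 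The rescue you propose — a diagonal argument via ``density of $\mathrm{span}(\mathcal U(A)e_B\mathcal U(A))$ in $L^2(\langle M,B\rangle,\Tr)$'' — is unavailable: that span is $Ae_BA$, whose $L^2$-closure is a proper subspace in general (in the example it is orthogonal to $he_Bh^*$). The standard proof instead averages the richer elements $c_F:=\sum_{x\in F}xe_Bx^*$ for every finite $F\subset M$: under $\neg(3)$ the minimal-norm element of $\overline{\mathrm{conv}}\{uc_Fu^*\}$ is $A$-central, positive and of finite trace, hence zero, and then $\Tr(uc_Fu^*c_F)=\sum_{x,y\in F}\|E_B(y^*ux)\|_{\tau_B}^2$ together with a simple convexity/pigeonhole step (no random extraction is needed) yields one unitary per pair $(F,\varepsilon)$; these form the required net. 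The same issue touches your $(3)\Rightarrow(2)$: for a given nonzero $A$-central $d$ one may have $\Tr(de_B)=0$ (in the example, $d=he_Bh^*$ satisfies $de_B=0$), so the ``small adjustment'' must consist of testing $d$ against $xe_Bx^*$ for a suitable $x\in M$, not against $e_B$. Finally, in $(1)\Rightarrow(3)$ the family $\{w_k\}$ carrying the central support of $e$ need not be finite, so $\Tr(\sum_kw_kd_0w_k^*)$ can be infinite; ``because $A$ is finite'' does not justify this, and the standard extra reduction is needed there as well.
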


The next three lemmas are well known to experts. We include short proofs for the reader's convenience. 

\begin{lem}\label{lem-intertwiner-tensor}
	Keep the setting as in Theorem \ref{thm-intertwining}. Let $A_1,A_2\subset M$ be finite von Neumann subalgebras with expectation such that $A_1$ and $A_2$ are commuting with each other.

Suppose that there are nonzero positive elements $d_i\in A_i' \cap \langle M,B\rangle$ satisfying the condition of item $(3)$ in Theorem \ref{thm-intertwining}  for $i=1,2$ such that $d_1d_2\neq 0$. Then we have $A_1\vee A_2\preceq_M B$.
\end{lem}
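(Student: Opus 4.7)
The plan is to verify condition $(3)$ of Theorem \ref{thm-intertwining} for $A_1\vee A_2$, that is, to manufacture a single nonzero positive element $d\in (A_1\vee A_2)'\cap \langle M,B\rangle$ satisfying the finiteness condition $\Tr(d)\in M$, out of the two separate witnesses $d_1$ and $d_2$. The natural idea is to project $d_1$ onto $A_2'\cap \langle M,B\rangle$, exploiting the commutation $A_1\subset A_2'$ to preserve $d_1$'s commutation with $A_1$ under the projection.

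Concretely, I would invoke the $\Tr$-preserving normal conditional expectation $E\colon \langle M,B\rangle \to A_2'\cap \langle M,B\rangle$, whose existence is automatic from Takesaki's criterion since $\Tr$ is a trace on $\langle M,B\rangle$ and hence has trivial modular group. Set $d:=E(d_1)$. Positivity and $d\in A_2'\cap \langle M,B\rangle$ are immediate, and $\Tr(d)\in M$ follows from $\Tr(d_1)\in M$ by $\Tr$-preservation. For the commutation with $A_1$: since $A_1\subset A_2'\cap M$ lies in the range of $E$, the expectation is $A_1$-bimodular, so $ad=E(ad_1)=E(d_1a)=da$ for all $a\in A_1$, using $d_1\in A_1'$. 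Hence $d\in (A_1\vee A_2)'\cap \langle M,B\rangle$.

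The crucial remaining step, and the one I expect to be the main obstacle, is the non-vanishing $d\neq 0$, since this is where the hypothesis $d_1d_2\neq 0$ must enter. For this I would use that $d_2\in A_2'\cap \langle M,B\rangle$ lies in the range of $E$, so $E(d_2^{1/2}d_1d_2^{1/2})=d_2^{1/2}\,d\,d_2^{1/2}$, and applying $\Tr$ yields $\Tr(d_2^{1/2}\,d\,d_2^{1/2})=\Tr(d_2^{1/2}d_1d_2^{1/2})$. The right-hand side is strictly positive because $d_2^{1/2}d_1d_2^{1/2}=(d_1^{1/2}d_2^{1/2})^*(d_1^{1/2}d_2^{1/2})$ vanishes iff $d_1d_2=0$, contradicting the hypothesis, and $\Tr$ is faithful. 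Hence $d_2^{1/2}\,d\,d_2^{1/2}\neq 0$, forcing $d\neq 0$, and Theorem \ref{thm-intertwining}\,$(3)\Rightarrow(1)$ yields $A_1\vee A_2\preceq_M B$.
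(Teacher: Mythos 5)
Your strategy is exactly the paper's: average $d_1$ over $\mathcal U(A_2)$ to produce an element of $(A_1\vee A_2)'\cap\langle M,B\rangle$, then pair against $d_2$ and use $d_1d_2\neq 0$ to rule out vanishing. The $A_1$-bimodularity argument and the non-vanishing computation $\Tr(d_2^{1/2}d_1d_2^{1/2})=\Tr(d_1^{1/2}d_2d_1^{1/2})>0$ are the same as in the paper. However, the step where you \emph{construct} the average has a genuine gap: the $\Tr$-preserving normal conditional expectation $E\colon \langle M,B\rangle\to A_2'\cap\langle M,B\rangle$ is \emph{not} automatic from Takesaki's criterion. Triviality of the modular group of $\Tr$ only disposes of the invariance condition; Takesaki's theorem also requires $\Tr$ to restrict to a \emph{semifinite} weight on the subalgebra, and this can fail for relative commutants in a semifinite algebra (e.g.\ $\Tr$ restricted to $JMJ=M'\subset B(L^2(M))$ for a $\rm II_1$ factor $M$ takes only the values $0$ and $\infty$ on projections, so no $\Tr$-preserving expectation onto it exists). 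The existence of the single finite-trace element $d_2$ in $A_2'\cap\langle M,B\rangle$ does not imply semifiniteness of the restriction, so your appeal to $E$ is unjustified as stated.

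The repair is exactly what the paper does: instead of a globally defined expectation, take the unique element $d$ of minimal $\left\|\cdot\right\|_{2,\Tr}$-norm in $\overline{\mathrm{conv}}^{\sigma\text{-weak}}\{ud_1u^*\mid u\in\mathcal U(A_2)\}$, i.e.\ the orthogonal projection of $d_1\in L^2(\langle M,B\rangle,\Tr)$ onto the $\Ad(\mathcal U(A_2))$-fixed vectors. This always exists (all elements of the hull are positive, bounded by $\|d_1\|$, and of trace at most $\Tr(d_1)$), it lies in $(A_1\vee A_2)'\cap\langle M,B\rangle$ by uniqueness of the minimizer and by $A_1$-invariance of the hull, and your pairing argument with $d_2$ goes through verbatim since $\langle ud_1u^*,d_2\rangle_{\Tr}=\Tr(d_2d_1)>0$ for every $u\in\mathcal U(A_2)$, so $0$ is not in the closed convex hull. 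With that substitution your proof is correct and coincides with the paper's.
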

\begin{proof}
	Consider $\Tr$ on $\langle M,B\rangle$ as in Theorem \ref{thm-intertwining}. Put
	$$ \mathcal K := \overline{\mathrm{conv}}^{\textrm{$\sigma$-weak}} \{ u d_1 u^* \mid u\in \mathcal U(A_2) \} \subset A_1'\cap \langle M,B\rangle.$$
By the normality of $\Tr$, any element in $\mathcal K$ has finite value in $\Tr$. Take the unique element $d\in \mathcal K$ which has the minimum distance from $0$. 
The uniqueness condition implies $udu^*=u$ for all $u\in \mathcal U(A_2)$, hence $d$ is contained in $(A_1\vee A_2)'\cap \langle M,B\rangle$. Then observe that $d_1,d_2\in L^2(\langle M,B\rangle,\Tr)$, and for any $u\in \mathcal U(A_2)$,
	$$\langle ud_1u^*,d_2 \rangle_{\Tr} = \Tr ( d_2 ud_1u^*) =  \Tr ( u^*d_2 ud_1)= \Tr(d_2d_1) = \Tr(d_1^{1/2} d_2 d_1^{1/2})>0.$$
This implies that $\mathcal K$ does not contain $0$, hence $d\neq 0$. This implies $A_1\vee A_2\preceq_M B$.
\end{proof}

\begin{lem}\label{lem-intertwining-core}
	Let $M$ be a von Neumann algebra, $\varphi\in M_\ast$ a faithful state, and $A,B\subset M_\varphi$ von Neumann subalgebras. If $A\not\preceq_M B$, then $A\ovt L\R \not\preceq_{C_\varphi(M)} B\ovt L\R$. 
\end{lem}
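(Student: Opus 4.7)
The plan is to prove the contrapositive: assume $A \not\preceq_M B$ and derive $A \ovt L\R \not\preceq_{C_\varphi(M)} B \ovt L\R$. By Theorem~\ref{thm-intertwining}(2), the hypothesis supplies a net $(u_i)_i \subset \mathcal{U}(A)$ with $\|E_B(x u_i y)\|_{\tau_B} \to 0$ for every $x, y \in M$. Here I take $E_B\colon M \to B$ to be the canonical $\varphi$-preserving expectation, obtained by composing the $\varphi$-preserving expectation $E_{M_\varphi}^M$ (which exists because $M_\varphi$ is $\sigma^\varphi$-globally fixed) with the trace-preserving expectation $M_\varphi \to B$; a routine uniqueness argument then shows that $E_B$ commutes with $\sigma^\varphi$. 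Since $A, B \subset M_\varphi$ are pointwise fixed by $\sigma^\varphi$, both $A$ and $B$ commute with $L\R$ inside $C_\varphi(M)$, so the inclusions $A \ovt L\R, B \ovt L\R \subset C_\varphi(M)$ are legitimate and $E_B$ extends to a canonical expectation $E_{B \ovt L\R} = E_B \rtimes \id\colon C_\varphi(M) \to B \ovt L\R$. I would view the $u_i$ as unitaries of $A \otimes 1 \subset A \ovt L\R$ and aim to show that this same net witnesses the failure of intertwining in the core.

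The core calculation is for smooth elements. Taking $X = z\,\lambda(f)$ and $Y = w\,\lambda(g)$ with $z, w \in M$ and $f, g$ smooth of compact support on $\R$, where $\lambda(f) := \int f(t)\,\lambda_t\,dt$, and using both that $u_i \in M_\varphi$ commutes with every $\lambda_t$ and that $E_B$ is $\sigma^\varphi$-invariant, a direct rearrangement yields
\[
E_{B \ovt L\R}(X^* u_i Y) \;=\; E_B(z^* u_i w) \cdot \lambda(k),
\]
where $k(u) = \int \overline{f(t)}\, g(u+t)\,dt$ is the fixed (i.e.\ $i$-independent) cross-correlation of $f$ with $g$. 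Since $B$ and $L\R$ commute inside $B \ovt L\R$, the $L^2$-norm with respect to the tensor trace $\tau_B \otimes \Tr_{L\R}$ factorises as
\[
\|E_{B \ovt L\R}(X^* u_i Y)\|_{\tau_B \otimes \Tr_{L\R}}^{\,2} \;=\; \|E_B(z^* u_i w)\|_{\tau_B}^{\,2} \cdot \|k\|_{L^2(\R)}^{\,2},
\]
which tends to $0$ by the defining property of $(u_i)$. A standard density argument reduces arbitrary finite trace elements $X, Y \in C_\varphi(M)$ to sums of such smooth elements.

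The main obstacle I anticipate is that Theorem~\ref{thm-intertwining} is stated only for finite $B$, whereas $B \ovt L\R$ is merely semifinite. I would handle this by compressing with $1 \otimes q$, where $q \in L\R$ is a finite trace spectral projection of the generator of the dual $\hat\R$-action on a bounded interval, so that $q(B \ovt L\R)q = B \ovt qL\R q$ becomes a finite von Neumann subalgebra of the finite trace corner $(1 \otimes q)\, C_\varphi(M)\, (1 \otimes q)$. Any intertwining $A \ovt L\R \preceq_{C_\varphi(M)} B \ovt L\R$ must descend, after suitable cut-downs on both sides, to a genuine finite intertwining inside such a corner, to which Theorem~\ref{thm-intertwining} applies verbatim and is contradicted by the computation above. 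Justifying this descent cleanly is the only nontrivial piece; once it is in place, the rest of the argument is mechanical.
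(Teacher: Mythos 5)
Your proposal is correct and follows essentially the same route as the paper: the paper's proof likewise extends the $\varphi$-preserving expectation $E_B$ (which commutes with $\sigma^\varphi$) to $E_{B\ovt L\R}\colon C_\varphi(M)\to B\ovt L\R$ and checks that the net $(u_i\otimes 1)_i$ from Theorem~\ref{thm-intertwining}(2) still witnesses non-intertwining in the core, merely omitting the explicit computation you carry out. The only difference is bookkeeping: the semifiniteness of $B\ovt L\R$ that you handle by cutting down to finite-trace corners is, in the paper, absorbed into the general (semifinite, non-unital) form of the intertwining criterion from \cite{HI15}, so your descent step is a legitimate substitute rather than a gap.
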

\begin{proof}
	Let $E_B\colon M\to B$ be the unique $\varphi$-preserving conditional expectation. Then since $E_B$ commutes with $\sigma^\varphi$, we can extend $E_B$ to one from $ C_\varphi(M)$ onto $B\ovt L\R$. Take any net $(u_i)_i$ of $\mathcal U(A)$ as in item (2) in Theorem \ref{thm-intertwining}. Then it is easy to see that $(u_i\otimes 1_{L\R})_i$ works for $A\ovt L\R\not\preceq_{C_\varphi(M)} B\ovt L\R$. 
\end{proof}

	In the lemma below, we need the case that $A,B \subset M$ are \textit{possibly non-unital} finite von Neumann subalgebras. In this case, we can use the same definition for $A\preceq_M B$, and item (2) in Theorem \ref{thm-intertwining} works by replacing $a,b\in M$ by $a,b\in M1_B$, where $1_B$ is the unit of $B$. Item (3) is slightly more complicated, but we do not need it.  See \cite[Theorem 4.3]{HI15} for more on the non-unital case.

\begin{lem}\label{lem-intertwining-finitelymany}
	Let $A,B_1,\ldots,B_n\subset M$ be (possibly non-unital) inclusions of $\sigma$-finite von Neumann algebras with expectation. Let $E_{B_k}\colon M_{1_{B_k}}\to B_k$ be faithful normal conditional expectations for all $k$ and assume that $A,B_1,\ldots,B_n$ are finite with trace $\tau_k\in (B_k)_\ast$ for all $k$. If $A\not\preceq_M B_k$ for all $k$, then there exists a net $(u_i)_{i}$ in $\mathcal U(A)$ such that for all $k$,
	$$\|E_{B_k}(b^* u_i a ) \|_{\tau_k}\rightarrow 0, \quad \text{for all $a,b\in M1_{B_k}$.}$$
\end{lem}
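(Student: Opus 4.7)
The plan is to reduce the lemma to the following quantitative claim: for every $\epsilon > 0$ and every choice of finite $F_k \subset M 1_{B_k}$, there exists a single $u \in \mathcal U(A)$ satisfying $\|E_{B_k}(b^* u a)\|_{\tau_k} < \epsilon$ for all $k$ and all $a, b \in F_k$. Granting this claim, the required net is immediately obtained by indexing over the directed set of tuples $(F_1, \ldots, F_n; \epsilon)$.

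To attack the claim, I would work inside the Jones basic constructions $\langle M, e_{B_k}\rangle$ with their canonical semifinite traces $\Tr_k$ (normalized so $\Tr_k(x e_{B_k} y) = \tau_k(E_{B_k}(yx))$) and introduce the positive elements $d_k := \sum_{a \in F_k} a\, e_{B_k}\, a^*$, which satisfy $\Tr_k(d_k) = \sum_{a}\tau_k(E_{B_k}(a^* a)) < \infty$. A direct computation exploiting $e_{B_k} z e_{B_k} = E_{B_k}(z) e_{B_k}$ yields the key identity
$$\Tr_k(u d_k u^* d_k) \;=\; \sum_{a, b \in F_k} \|E_{B_k}(b^* u a)\|_{\tau_k}^2 \qquad \text{for every } u \in \mathcal U(A),$$
so the claim reduces to finding $u$ that makes $\sum_k \Tr_k(u d_k u^* d_k)$ arbitrarily small.

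The heart of the argument is an averaging step in $H := \bigoplus_k L^2(\langle M, e_{B_k}\rangle, \Tr_k)$, on which $\mathcal U(A)$ acts isometrically by the diagonal conjugation $u \cdot (x_k)_k = (u x_k u^*)_k$. The orbit $\mathcal O := \{(u d_k u^*)_k : u \in \mathcal U(A)\}$ is bounded, so its weakly closed convex hull $\mathcal K$ admits a unique minimum-$H$-norm element $P = (P_k)_k$; by uniqueness of minimizers, $P$ is $\mathcal U(A)$-invariant, so each $P_k \in A' \cap \langle M, e_{B_k}\rangle$ is a positive element of $L^2(\Tr_k)$. I would then show $P = 0$ using spectral projections: for $e_m := 1_{[1/m, m]}(P_k) \in A' \cap \langle M, e_{B_k}\rangle$, the bound $(1/m) e_m \leq P_k$ combined with $\|P_k\|_{\Tr_k} < \infty$ forces $\Tr_k(e_m) \leq m^2 \|P_k\|_{\Tr_k}^2 < \infty$, and since $A \not\preceq_M B_k$, Theorem \ref{thm-intertwining}(3) excludes any such nonzero projection, hence $e_m = 0$ for all $m$ and $P_k = 0$.

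With $0 \in \mathcal K$, Mazur's lemma produces, for each $\delta > 0$, a convex combination $\sum_i \lambda_i u_i$ from $\mathcal U(A)$ with $\sum_k \|\sum_i \lambda_i u_i d_k u_i^* \|_{\Tr_k}^2 < \delta^2$. Pairing against $(d_k)_k$ and applying Cauchy--Schwarz twice yields
$$\sum_i \lambda_i \sum_k \Tr_k(u_i d_k u_i^* d_k) \;\leq\; \delta \cdot \Bigl(\sum_k \|d_k\|_{\Tr_k}^2\Bigr)^{1/2},$$
so some $u_{i_0}$ in the support achieves the desired bound simultaneously for all $k$; choosing $\delta$ appropriately completes the quantitative claim. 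The main obstacle I anticipate is justifying the use of Theorem \ref{thm-intertwining}(3) in the spectral-projection step for possibly non-unital $B_k$, since the excerpt only states item $(3)$ in the unital setting; one must verify that a nonzero positive element of $A' \cap \langle M, e_{B_k}\rangle$ of finite $\Tr_k$ still yields an intertwiner contradicting $A \not\preceq_M B_k$, which is a standard but technical adaptation of the unital proof.
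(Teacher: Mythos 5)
Your proof is correct in outline, but it takes a genuinely different route from the paper. The paper disposes of the lemma in three lines: it amplifies to $M_n := M\otimes \C^n$, embeds $\widetilde{A}:=A\otimes\C 1_{\C^n}$ and $\widetilde{B}:=B_1\oplus\cdots\oplus B_n$, quotes \cite[Remark 4.2(2)]{HI15} to pass from $\widetilde{A}\not\preceq_{M_n}B_k$ for each $k$ to $\widetilde{A}\not\preceq_{M_n}\widetilde{B}$, and then applies item (2) of Theorem \ref{thm-intertwining} once to the single inclusion $\widetilde{B}\subset M_n$. You instead rerun the averaging proof of Popa's criterion directly in $\bigoplus_k L^2(\langle M,e_{B_k}\rangle,\Tr_k)$: the key identity $\Tr_k(ud_ku^*d_k)=\sum_{a,b}\|E_{B_k}(b^*ua)\|_{\tau_k}^2$, the circumcenter/minimal-norm argument, the spectral-projection step, and the Mazur extraction are all correct, and this essentially reproves (in a self-contained way) the multi-target statement that the paper outsources to \cite{HI15}. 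What the paper's reduction buys is precisely the avoidance of the obstacle you flag at the end: by staying entirely within item (2) of Theorem \ref{thm-intertwining}, it never needs the item-(3) characterization for non-unital $B_k$, which the paper explicitly declines to formulate ("Item (3) is slightly more complicated, but we do not need it"). Your argument does need that non-unital version of (3) in the spectral-projection step (and, more basically, a careful setup of $\langle M,e_{B_k}\rangle$ and $\Tr_k$ when $1_{B_k}\neq 1$); this is available in \cite[Theorem 4.3]{HI15}, so the gap is fillable, but as written your proof carries a technical burden that the paper's amplification trick sidesteps entirely.
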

\begin{proof}
	Consider $M_n:=M\otimes \C^n$ and embeddings 
	$$\widetilde{A}:=A\otimes \C1_{\C^n}\subset M_n,\quad \widetilde{B}:=B_1\oplus\cdots\oplus B_n \subset M_n.$$
Observe that $A\not\preceq_M B_k$ implies $\widetilde{A}\not\preceq_{M_n} B_k$ for all $k$ (use item (2) of Theorem \ref{thm-intertwining}).  Then by \cite[Remark 4.2(2)]{HI15}, it holds that $\widetilde{A}\not\preceq_{M_n} \widetilde{B}$. Then any net of unitaries in item $(2)$ of Theorem \ref{thm-intertwining} for $\widetilde{A}\not\preceq_{M_n} \widetilde{B}$ works.
\end{proof}

\section{Proof of Theorem \ref{thmA}: (1)$\Rightarrow$(2)}

We need two lemmas. The first one uses the \textit{measurable selection principle}.

\begin{lem}\label{lem-section}
	Let $M$ be a von Neumann algebra with separable predual, $\varphi$ a faithful normal state, and  $\theta \in \Aut(M)$ a pointwise inner automorphism such that $\theta(\varphi)=\varphi$. %Assume that a positive invertible element $a\in M_\varphi$ satisfies that: for any $p\in (0,1)$, there is $u\in \mathcal U(M)$ such that
%	$$ \theta(\varphi_{a^p}) = u \varphi_{a^p} u^* .$$
Then for any positive invertible element $a\in M_\varphi$, there exist a compact subset $K\subset (0,1)$ with positive Lebesgure measure, and a continuous map $K \ni p\mapsto u_p\in \mathcal U(M)$ such that
	$$ \theta(\varphi_{a^p}) = u_p \varphi_{a^p} u^*_p ,\quad \text{for all}\quad p\in K,$$
where $\mathcal U(M)$ is equipped with the $\ast$-strong topology.
\end{lem}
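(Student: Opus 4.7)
The plan is to combine the Jankov--von Neumann measurable selection theorem with Lusin's theorem. Since $M_\ast$ is separable, the unitary group $\mathcal U(M)$ equipped with the $\ast$-strong topology is a Polish space, and so is $(0,1)\times \mathcal U(M)$. I would introduce
$$ S := \{(p, u) \in (0,1) \times \mathcal U(M) : \theta(\varphi_{a^p}) = u \varphi_{a^p} u^*\} $$
and show that $S$ is closed. Two continuity facts underlie this. First, because $a$ is positive and invertible its spectrum sits in a compact subset of $(0,\infty)$, so $p\mapsto a^p$ is operator-norm continuous by the continuous functional calculus, and hence $p\mapsto \varphi_{a^p} = a^{p/2}\varphi a^{p/2}$ is norm-continuous from $(0,1)$ into $M_\ast$. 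Second, the map $(u,\psi)\mapsto u\psi u^*$ from $\mathcal U(M)\times M_\ast$ to $M_\ast$ is jointly continuous when $\mathcal U(M)$ carries the $\ast$-strong topology and $M_\ast$ the norm topology: for a bounded net $u_n\to u$ $\ast$-strongly and a fixed $\psi\in M_\ast$, one has $u_n\psi\to u\psi$ and $\psi u_n\to \psi u$ in norm, and an elementary three-term decomposition then handles joint convergence.

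Once $S$ is closed, the pointwise inner hypothesis on $\theta$ says precisely that the projection of $S$ onto the first coordinate equals $(0,1)$. The Jankov--von Neumann uniformization theorem, applied to the Borel set $S$ inside a product of Polish spaces, then delivers a universally measurable (in particular Lebesgue measurable) section $p\mapsto u_p$ from $(0,1)$ into $\mathcal U(M)$ with $(p,u_p)\in S$ for every $p$. Applying Lusin's theorem for measurable maps from $(0,1)$ into the separable metric space $\mathcal U(M)$, I obtain a compact subset $K\subset (0,1)$ of positive Lebesgue measure on which $p\mapsto u_p$ is $\ast$-strongly continuous, and $\theta(\varphi_{a^p}) = u_p\varphi_{a^p}u_p^*$ for all $p\in K$ by construction.

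The main obstacle I expect is simply the verification that $S$ is closed; once the two continuity facts above are in hand, the descriptive set theory is a black box application. I note that the hypothesis $\theta(\varphi)=\varphi$ does not appear to be needed in the proof of this lemma and is presumably included for use in the subsequent argument where $\theta$ will be studied through its restriction to $M_\varphi$.
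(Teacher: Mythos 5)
Your proposal is correct and follows essentially the same route as the paper: the paper defines the same closed set $\mathcal S\subset (0,1)\times\mathcal U(M)$, applies the measurable selection principle (citing \cite[Theorem 14.3.6]{KR97}) to get a measurable section, and then uses Lusin's theorem to extract the compact set $K$. Your verification that $\mathcal S$ is closed fills in a step the paper leaves as ``easy to see,'' and your remark that $\theta(\varphi)=\varphi$ is not needed for this particular lemma is accurate.
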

\begin{proof}
	Throughout the proof, we consider the $\ast$-strong topology for $\mathcal U(M)$ and we regard it as a Polish space. Set
	$$\mathcal S:= \{ (p,u)\in (0,1)\times \mathcal U(M)\mid \theta(\varphi_{a^p}) = u \varphi_{a^p} u^* \} .$$
Then it is easy to see that $\mathcal S$ is a closed subset. Since $\theta$ is pointwise inner, we have $(0,1)=\pi_1(\mathcal S)$, where $\pi_1$ is the projection onto the first coordinate. Hence, with the Lebesgue measure on $(0,1)$, we can apply the measurable selection principle (e.g.\ \cite[Theorem 14.3.6]{KR97}) and find a Lebesgue measurable map 
	$$\eta \colon (0,1)=\pi_1(\mathcal S) \to \mathcal U(M)$$
such that $(p,\eta(p)) \in \mathcal S$ for all $p\in (0,1)$. Since $\mathcal U(M)$ is a Polish space, we can apply Lusin's theorem to $\eta$ and find a compact subset $K\subset (0,1)$ such that $\eta|_{K}$ is continuous and that $(0,1)\setminus K$ has arbitrary small Lebesgue measure. We get the conclusion.
\end{proof}

The second lemma uses the first one. This lemma is the key observation of the article.

\begin{lem}\label{key-lemma}
	Let $M$ be a von Neumann algebra with separable predual, $\varphi\in M_\ast$ a faithful state, $\theta\in \Aut(M)$ a pointwise inner automorphism such that $\theta(\varphi)=\varphi$. Then for any positive invertible element $a\in M_\varphi$, we have $ A\preceq_M \theta(A)  $, where  $A:=\mathrm{W}^*\{a\}\subset M_\varphi$. 
\end{lem}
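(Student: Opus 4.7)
The plan is to extract from the pointwise-inner hypothesis a positive-measure family of unitaries implementing $\theta$ on a rich family of states, convert this into a Connes cocycle identity, and then work inside the continuous core $\tilde M := C_\varphi(M)$, where the twist by $\sigma^\varphi$ becomes inner; the intertwining can be assembled there and pulled back to $M$ via Lemma \ref{lem-intertwining-core}.

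First I would apply Lemma \ref{lem-section} to obtain a compact $K \subset (0,1)$ of positive Lebesgue measure and a $\ast$-strongly continuous map $p \mapsto u_p \in \mathcal{U}(M)$ such that $\theta(\varphi_{a^p}) = u_p \varphi_{a^p} u_p^*$ for every $p \in K$. Since $a \in M_\varphi$ is positive invertible, $[D\varphi_{a^p}:D\varphi]_t = a^{ipt}$; combining this with the standard identity $[Du\psi u^*:D\psi]_t = u \sigma_t^\psi(u^*)$, the chain rule for Connes cocycles, and the commutation $\theta \sigma_t^\varphi = \sigma_t^\varphi \theta$ (which follows from $\theta(\varphi) = \varphi$), one obtains the key identity
\[
\theta(a^{ipt}) \;=\; u_p\, a^{ipt}\, \sigma_t^\varphi(u_p^*), \qquad t \in \R,\ p \in K. \qquad (\ast)
\]

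Next I would pass to $\tilde M := M \rtimes_{\sigma^\varphi} \R$ with canonical unitaries $\lambda_t$. Since $\theta$ commutes with $\sigma^\varphi$, it extends to $\tilde\theta \in \Aut(\tilde M)$ with $\tilde\theta|_{L\R} = \id$. Because $a \in M_\varphi$ commutes with each $\lambda_t$, the unitaries $\mu_t^{(p)} := a^{ipt}\lambda_t$ form a continuous one-parameter unitary group generating an abelian subalgebra $L_p \subset A \ovt L\R$, and rewriting $\sigma_t^\varphi(\cdot) = \Ad(\lambda_t)$ in $(\ast)$ turns it into $\tilde\theta(\mu_t^{(p)}) = u_p \mu_t^{(p)} u_p^*$. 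Thus $\tilde\theta|_{L_p} = \Ad(u_p)|_{L_p}$ and $u_p L_p u_p^* \subset B := \theta(A) \ovt L\R$. Fixing a positive $f \in L\R$ with $0 < \tau_{L\R}(f^2) < \infty$, the element $d_p := u_p^* f e_B f u_p \in \langle \tilde M, B\rangle_+$ lies in $L_p' \cap \langle \tilde M, B\rangle$ (a short computation using that $\tilde\theta(\mu_t^{(p)}) \in B$ and that $e_B, f$ commute with $B$) and satisfies $\Tr(d_p) = \tau_B(f^2) < \infty$.

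Since $K$ has positive measure it has an accumulation point, so the continuity of $p \mapsto u_p$ yields $p_1 \neq p_2 \in K$ for which $d_{p_2}$ is arbitrarily $\ast$-strongly close to $d_{p_1}$; hence $d_{p_1} d_{p_2}$ is $\sigma$-weakly close to $d_{p_1}^2 \neq 0$ and in particular nonzero. The identities $\mu_t^{(p_1)} (\mu_t^{(p_2)})^{-1} = a^{i(p_1 - p_2)t}$ and $a^{-ip_1 t}\mu_t^{(p_1)} = \lambda_t$ give $L_{p_1} \vee L_{p_2} = A \ovt L\R$, and $L_{p_1}, L_{p_2}$ commute inside the abelian $A \ovt L\R$. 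The argument of Lemma \ref{lem-intertwiner-tensor} (whose proof goes through verbatim in this semifinite setting) then yields $A \ovt L\R \preceq_{\tilde M} B$, and the contrapositive of Lemma \ref{lem-intertwining-core} gives $A \preceq_M \theta(A)$. The hard parts will be the verification of $(\ast)$ (which is where $\theta(\varphi) = \varphi$ is used essentially) and the non-vanishing $d_{p_1} d_{p_2} \neq 0$; the latter is exactly why the continuity of $p \mapsto u_p$ on the positive-measure set $K$ (rather than merely a single implementing unitary) is essential, and hence why Lemma \ref{lem-section} plays its crucial role.
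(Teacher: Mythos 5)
Your proposal is correct and follows essentially the same route as the paper: Lemma \ref{lem-section} to get a continuous positive-measure family $p\mapsto u_p$, the Connes cocycle identity $\theta(a^{\ri pt})=u_pa^{\ri pt}\sigma_t^\varphi(u_p^*)$, the subalgebras $\mathrm{W}^*\{a^{\ri pt}\otimes\lambda_t\}$ of the core with intertwiners $u_p^*e u_p$, Lemma \ref{lem-intertwiner-tensor} for two nearby $p_1\neq p_2$, and Lemma \ref{lem-intertwining-core} to descend to $M$. Your only deviation is the cut-down $u_p^*fe_Bfu_p$ with $f\in L\R$ of finite trace, which is a reasonable (arguably more careful) way to guarantee $\Tr(d_p)<\infty$ in the semifinite setting.
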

\begin{proof}
	Observe that $\theta(M_\varphi)=M_\varphi$ and $\theta(A)\subset M_\varphi$. Take any $u\in \mathcal U(M)$ such that $\varphi_{\theta(a)} = \theta(\varphi_a) = u \varphi_a u^*$. Then since
\begin{align*}
	&[D\varphi_{\theta(a)} :D\varphi_a]_t 
	= [D\varphi_{\theta(a)} :D\varphi]_t[D\varphi :D\varphi_a]_t
	= \theta(a^{\ri t})a^{-\ri t}, \quad \text{and}\\
	& [Du \varphi_a u^* :D\varphi_a]_t 
	= u\sigma_t^{\varphi_a} (u^*)
	= u a^{\ri t} \sigma_t^{\varphi} (u^*) a^{-\ri t} ,
\end{align*}
we get $ \theta(a^{\ri t}) = u a^{\ri t} \sigma_t^{\varphi} (u^*)$ for all $ t\in \R $. 
Define
	$$ \widetilde{A}:=\mathrm{W^*}\{ a^{\ri t}\otimes \lambda_t \mid t\in \R \}\subset C_\varphi(M), $$
where $\lambda_t$ denotes the left regular representation on $L^2(\R)$, and observe that $\widetilde{A}\subset C_\varphi(M)$ is with expectation. Then it is easy to see that $\Ad(u)$ restricts to a $\ast$-homomorphism from $\widetilde{A}$ into $\theta(A)\ovt L\R$, so that we get $\widetilde{A}\preceq_{C_\varphi(M)} \theta(A)\ovt L\R$. 
More precisely, if we denote by $e$ the Jones projection for any fixed faithful normal conditional expectation from $C_\varphi(M)$ onto $\theta(A)\ovt L\R$, then the projection $u^* e u$ satisfies the condition of item $(3)$ in Theorem \ref{thm-intertwining}.

	Fix $0<p<1$ and we consider the positive invertible element $a^p$. Then by the same reasoning as above, it holds that 
	$$ \widetilde{A}_p\preceq_{C_\varphi(M)} \theta(A)\ovt L\R, \quad \text{where}\quad \widetilde{A}_p:=\mathrm{W^*}\{ a^{\ri pt}\otimes \lambda_t \mid t\in \R \},$$
together with the projection $u_p^* e u_p$, where $u_p$ is any unitary satisfying $\varphi_{\theta(a^p)} =  u_p \varphi_{a^p} u_p^*$. We note that the algebra $\theta(A)\ovt L\R$ does not depend on $p$.

	Now we claim $A\ovt L\R \preceq_{C_\varphi(M)} \theta(A)\ovt L\R$. To see this, we first apply Lemma \ref{lem-section} and find a compact $K\subset (0,1)$ with positive measure and a continuous map $K\ni p \mapsto u_p$. Then take any sequence $p_n\in K$ such that $p_n $ converges to $p\in K$ and $p\not\in \{p_n\}_n$. Then $u_{p_n} \to u_p$ converges in the $\ast$-strong topology, hence 
	$$d_{p_n}:=u_{p_n}^* e u_{p_n}\to u_{p}^* e u_{p}=:d_p $$
as well. In particular, we can find $p_n=:q$ such that $q\neq p$ and $d_q d_p\neq 0$. Then by Lemma \ref{lem-intertwiner-tensor}, it holds that 
	$$\widetilde{ A}_p\vee \widetilde{A}_q \preceq_{C_\varphi(M)} \theta(A)\ovt L\R. $$
Since $p\neq q$, the left hand side coincides with $A\ovt L\R$ and the claim is proven. 

Finally, this condition implies $A\preceq_M \theta(A)$ by Lemma \ref{lem-intertwining-core}.
\end{proof}

Now we prove (1)$\Rightarrow$(2) of Theorem \ref{thmA}. For the proof, we need Popa's $\mathcal G$-singular masa technique for type $\rm III_1$ factors, see Theorem \ref{thm-typeIII-G-singular}.

\begin{proof}[{\bf Proof of Theorem \ref{thmA}:(1)$\Rightarrow$(2)}]
	Let $\psi\in M_\ast^+$ be any faithful state such that $M_\psi'\cap M=\C$. Since $\theta$ is pointwise inner, there is $v\in \mathcal U(M)$ such that $\Ad(v)\circ \theta(\psi)=\psi$. So up to replacing $\Ad(v)\circ \theta$ by $\theta$, we may assume $\theta(\psi)=\psi$. Then we show that $\theta|_{M_\psi}$ is inner. 

Suppose by contradiction that $\theta$ is not inner on $M_\psi$. Then putting $\mathcal G:=\{\id_M,\ \theta^{-1}\}$, we apply Theorem \ref{thm-typeIII-G-singular} and take a $\mathcal G$-singular masa $A\subset M_\psi$. 
Take any generator $a$ of $A$ which is positive and invertible. We can apply Lemma \ref{key-lemma} to $a$ and get $A\preceq_M \theta(A)$. Since $A$ and $\theta(A)$ are masas in $M$, one can find a partial isometry $v\in M$ such that 
	$$ v^*v \in A,\quad vv^* \in \theta(A),\quad v A v^* \subset \theta(A)vv^* .$$
(For this, see for example the proof of \cite[Lemma F.17]{BO08}.) Thus $\Ad(v)$ restricts to a $\ast$-isomorphism from $Av^*v$ onto $\theta(A)vv^*$. 
Since $A$ is diffuse, up to exchanging $v^*v$ with a smaller projection in $A$ (or $vv^*$ with one in $\theta(A)$) if necessary, we may assume $v^*v\neq 1$ and $vv^*\neq 1$. 
Then since $M$ is a type III factor, $1-v^*v$ and $1-vv^*$ are equivalent, so we can take a partial isometry $w\in M$ such that $w^*w=1-v^*v$ and $ww^* = 1-vv^*$. 
Then $\widetilde{v}:= v+w\in M$ is a unitary element satisfying $\widetilde{v}p = v$ where $p:=v^*v$, so that 
	$$ \widetilde{v} A p \widetilde{v}^* = v A v^*\subset \theta(A).$$
This means $ \Ad(\theta^{-1}(\widetilde{v})) \circ \theta^{-1} (A p)  \subset A.$
By the $\mathcal G$-singularity, it holds that $\theta^{-1}=\id_M$, a contradiction.
\end{proof}

\section{Proof of Theorem \ref{thmA}: (2)$\Rightarrow$(3) and (3)$\Rightarrow$(4)}

\begin{proof}[{\bf Proof Theorem \ref{thmA}: (2)$\Rightarrow$(3)}]
	We may assume that $\theta$ is outer on $M$. Set $M_2:=M\otimes \M_2$ and consider an embedding $M\subset M_2$ given by 
	$$  M \ni x \mapsto \left[\begin{array}{cc}
x & 0 \\
0 & \theta(x)
\end{array}\right] \in M_2.$$
Since $\theta$ is outer, we have $M'\cap M_2 = \C \oplus \C$. We consider the relative bicentralizer  
	$$ \mathrm{BC}_\varphi(M\subset M_2) = (M^\omega)_{\varphi^\omega}' \cap M_2. $$
We claim $\mathrm{BC}_\varphi(M\subset M_2)\neq M'\cap M_2$. 

To see this, suppose by contradiction that $\mathrm{BC}_\varphi(M\subset M_2)=M'\cap M_2$. Then by Theorem \ref{thm-bicentralizer}, there is an amenable $\rm II_1$ factor $R\subset M$ with expectation $E_R$ such that $R'\cap M_2 = M'\cap M_2 = \C\oplus \C$. 
Put $\psi:=\tau_R\circ E_R$, where $\tau_R$ is the trace on $R$, and observe that $R\subset M_\psi \subset M$. Then since $M_\psi'\cap M\subset R'\cap M=\C$ (this follows by $R'\cap \widetilde{M}=\C\oplus \C$), by assumption, there is $u\in \mathcal U(M)$ such that $\Ad(u)\circ \theta|_{M_\psi} = \id_{M_\psi}$. 
Then  $\left[\begin{array}{cc}
0 & u  \\
0 & 0
\end{array}\right]$ is contained in $M_\psi'\cap M_2 \subset R'\cap M_2 = \C\oplus \C$. This is a contradiction and the claim is proven.

We take any $\left[\begin{array}{cc}
a & b  \\
c & d
\end{array}\right]\in (M^\omega)_{\varphi^\omega}' \cap M_2$, which is not contained in $\C\oplus \C$. Then since $(M^\omega)_{\varphi^\omega}' \cap M = \mathrm{BC}_\varphi(M)=\C$, we have $a,d\in \C$, hence $c\neq 0$ or $d\neq 0$. By taking adjoint if necessary, we may assume $c\neq 0$. Then observe that $\theta(x)c = cx$ for all $x\in (M^\omega)_{\varphi^\omega}$. Since $(M^\omega)_{\varphi^\omega}' \cap M =\C$, by the polar decomposition, we can replace $c$ by a unitary element. This means (3) holds. 
\end{proof}

\begin{proof}[{\bf Proof Theorem \ref{thmA}: (3)$\Rightarrow$(4)}]
By assumption, take $u\in \mathcal U(M)$ such that $\Ad(u)\circ \theta = \id$ on $(M^\omega)_{\varphi^\omega}$. Up to replacing $\Ad(u)\circ \theta$ by $\theta$, we may assume $u=1$. Then by Lemma \ref{lem-centralizer-singular}, it holds that $\theta(\varphi)=\varphi$. In particular, $\theta$ commutes with the modular action of $\varphi$.

We set
	$$H(\theta):= \{ n\in \Z\mid \theta^n = \Ad(u)\circ \sigma^\varphi_t \quad \text{for some }u\in \mathcal U(M), \ t\in \R\}.$$
Observe that by $M_\varphi'\cap M=\C$ and $\theta|_{M_\varphi}=\id$, the equality $\theta^n = \Ad(u)\circ \sigma^\varphi_t$ implies $u\in \C$, so we can actually remove $u\in \mathcal U(M)$ in the definition of $H(\theta)$. 
Since $H(\theta)$ is a subgroup in $\Z$, we can write $H(\theta)= k\Z$ for some $k\geq 0$. 

Assume that $k>0$ and $\theta^k= \sigma^\varphi_t $. Then putting $\widetilde{\theta}:=\sigma^\varphi_{-t/k}\circ \theta$, observe that $H(\widetilde{\theta})=H(\theta)$, $\widetilde{\theta}^k=\id$, and $\widetilde{\theta}=\id$ on $(M^\omega)_{\varphi^\omega}$. Hence up to replacing $\widetilde{\theta}$ by $\theta$, we may assume $\theta^k =\id$. If $k=0$, we do not need this replacement.

For any $k\geq 0$, set $G:=\Z/k\Z$ and define an outer action $G\actson M$ given by $\{\theta^n\}_{n\in \Z}$. We denote this action again by $\theta$. Put $\widetilde{M}:=M\rtimes_\theta G$ with canonical expectation $E_M\colon \widetilde{M}\to M$ and note that $M'\cap \widetilde{M}=\C$ by the outerness of $\theta$. We extend $\varphi$ on $\widetilde{M}$ by $E_M$. Since $\theta$ commutes with the modular action of $\varphi$, the continuous core of $\widetilde{M}$ is of the form that 
	$$C_\varphi(\widetilde{M}) = M\rtimes_{\theta\times \sigma^\varphi} (G\times \R) = C_\varphi(M)\rtimes_\theta G,$$
where $\theta\colon G\actson C_\varphi(M)$ in the right hand side is the canonical extension. 

The next claim is important to us. 

\begin{claim}
	We have $M'\cap C_\varphi(\widetilde{M}) = \C$.
\end{claim}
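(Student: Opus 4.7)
My plan is to perform a two-level spectral analysis. Starting from the given decomposition $C_\varphi(\widetilde M) = C_\varphi(M) \rtimes_\theta G$, I would first use the outer crossed product structure, then exploit the continuous-core structure of $C_\varphi(M) = M \rtimes_{\sigma^\varphi} \R$ via its dual action.

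Concretely, every $z \in M'\cap C_\varphi(\widetilde M)$ admits a Fourier expansion $z=\sum_{g\in G} y_g u_g$ with $y_g\in C_\varphi(M)$, where the $u_g$ are the canonical unitaries implementing $\theta$. Commutation with $M$ is equivalent to $y y_g = y_g\theta^g(y)$ for all $y\in M$. The first key input is that $M_\varphi\subset (M^\omega)_{\varphi^\omega}$ (modular-fixed elements of $M$ remain modular-fixed in the ultrapower), so $\theta^g|_{M_\varphi}=\mathrm{id}$ for every $g$; testing the above equation with $y\in M_\varphi$ then forces $y_g\in M_\varphi'\cap C_\varphi(M)$.

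Next I would identify $M_\varphi'\cap C_\varphi(M) = L\R$. Any element of $C_\varphi(M)$ decomposes along the spectral subspaces $M\lambda_s$ of the dual action $\widehat{\sigma^\varphi}$ (which fixes $M$ pointwise), and the condition that a spectral component $x_s\lambda_s$ commute with $M_\varphi$ reduces to $x_s\in M_\varphi'\cap M=\C$, using the hypothesis on $\varphi$. Hence each $y_g$ lies in $L\R$, with scalar spectral density $c_g(s)$. Plugging this back into $y y_g = y_g \theta^g(y)$ and comparing components on $M\lambda_s$ (via $y\lambda_s = \lambda_s \sigma^\varphi_{-s}(y)$) yields the key equation: the support of $c_g$ is contained in $\{s\in\R : \theta^g = \sigma^\varphi_{-s}\}$.

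The conclusion splits into two cases. For $g\neq 0$ in $G$, the definition $H(\theta)=k\Z$ combined with the reduction $\theta^k=\mathrm{id}$ gives $\theta^g \neq \sigma^\varphi_s$ for every $s\in\R$, so $c_g\equiv 0$ and $y_g=0$. For $g=0$, the equation $\sigma^\varphi_{-s}=\mathrm{id}$ forces $s=0$ because $M$ is of type $\mathrm{III}_1$: the spectrum $[0,\infty)$ of $\Delta_\varphi$ ensures $s\mapsto\sigma^\varphi_s$ is injective into $\Aut(M)$. Therefore $z=c_0(0)\in\C$. The main subtlety I anticipate is the rigorous handling of the continuous-spectrum integrals (and of the $\Z$-Fourier sum when $k=0$), but this is standard via the Arveson spectral subspaces of $\widehat{\sigma^\varphi}$; the conceptual heart of the proof is the interplay between the rigidity of $H(\theta)$ and the faithfulness of the modular flow for type $\mathrm{III}_1$ factors, which together exclude all nontrivial intertwiners.
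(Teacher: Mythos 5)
Your argument is correct in substance, but it takes a genuinely different route from the paper's. Both proofs share the same opening move: since $\theta$ and $\sigma^\varphi$ fix $M_\varphi$ pointwise and $M_\varphi'\cap M=\C$, the relative commutant of $M_\varphi$ in $C_\varphi(\widetilde{M})$ is forced into $L(G\times \R)$ (the paper gets there via the containment in $\C\ovt\B(L^2(G\times\R))$ and the $\varphi$-preserving expectation; you get there via the spectral subspaces of the dual action). From that point the paper finishes structurally in two lines: $M'\cap L(G\times\R)$ commutes with both $M$ and the abelian algebra $L(G\times\R)$, hence lies in $\mathcal Z(C_\varphi(\widetilde{M}))$, which is trivial because the canonical extension of the outer $G$-action to $C_\varphi(M)$ remains outer (cited from Takesaki), so $C_\varphi(\widetilde{M})$ is a factor. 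You instead carry out the Fourier analysis by hand: each $G$-Fourier coefficient $y_g\in L\R$ intertwines the copy of $M$ with its image under $\theta^g$, and you exclude $y_g\neq 0$ for $g\neq 0$ using $H(\theta)=k\Z$ and the triviality of $T(M)$ for type $\rm III_1$. This amounts to reproving, in the case at hand, exactly the outerness-of-the-canonical-extension fact that the paper outsources; what your route buys is self-containedness (no factoriality of $C_\varphi(\widetilde{M})$ needed), at the price of rigor in the spectral step, since an element of $L\R$ has no literal density $c_g(s)$. The clean repair is standard: the dual action $\widehat{\sigma^\varphi}_p$ fixes $M$ pointwise and preserves the intertwining relation, so $y_g^*\widehat{\sigma^\varphi}_p(y_g)$ and $y_gy_g^*$ lie in $M'\cap L\R\subset\mathcal Z(C_\varphi(M))=\C$; if $y_g\neq 0$ one normalizes to a unitary $w$ with $w^*\widehat{\sigma^\varphi}_p(w)=e^{\ri s_0 p}$, forcing $w\in\C\lambda_{s_0}$ and hence $\theta^g=\sigma^\varphi_{\pm s_0}$, contradicting $g\notin H(\theta)$ (and giving $s_0=0$, $y_0\in\C$, when $g=0$). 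With that repair your proof is complete.
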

\begin{proof}[{\bf Proof of Claim}]
	Since the canonical extension $\theta \colon G\actson C_\varphi(M)$ is still outer (e.g.\ \cite[Lemma XII.6.14]{Ta03}), the crossed product $C_\varphi(M)\rtimes_\theta G$ is a factor. 
Let $E:= \varphi\otimes \id\colon \B(L^2(M)) \ovt \B(L^2(G\times \R)) \to \B(L^2(G\times \R))$ and observe that it restricts to a conditional expectation $M\rtimes(G\times \R) \to L(G\times \R)$ (because $\theta$ and $\sigma^\varphi$ preserves $\varphi$). 

Since the $G\times \R$ action is trivial on $M_{\varphi}$, it holds that
\begin{align*}
	M_{\varphi}'\cap C_\varphi(\widetilde{M})
	\subset (M_{\varphi}' \cap M)\ovt \B(L^2(G\times \R)) = \C \ovt \B(L^2(G\times \R)) .
\end{align*}
We have $E(x)=x$ for all $x\in M_{\varphi}'\cap C_\varphi(\widetilde{M})$, so that $M_{\varphi}'\cap C_\varphi(\widetilde{M})=L(G\times \R)$. Then since $G\times \R$ is abelian, 
	$$ M'\cap C_\varphi(\widetilde{M}) = M'\cap L(G\times \R) \subset \mathcal Z(C_\varphi(\widetilde{M})).$$
Since $C_\varphi(\widetilde{M})$ is a factor, we get the conclusion.
\end{proof}

To finish the proof, we have to show $k=1$. For this, suppose by contradiction that $k\neq 1$ and $G=\Z/k\Z$ is a nontrivial group. Then observe that the inclusion $M\subset \widetilde{M}$ is with expectation, irreducible, regular, $\mathrm{BC}_\varphi(M)=\C$, and $M'\cap C_\varphi(\widetilde{M})\subset C_\varphi(M)$. Hence we can apply Theorem \ref{thm-Amine} and get that $\mathrm{BC}_\varphi(M\subset \widetilde{M}) =\C$. 
Since $G$ is nontrivial, $1\in G$ (the image of $1\in \Z$ in $G$) is nontrivial. We denote by $\lambda_1^G\in LG$ the canonical unitary corresponding to $1\in G$. Then since $\theta=\id$ on $(M^\omega)_{\varphi^\omega}$ and since $\Ad(\lambda_1^G)\in \Aut(\widetilde{M})$ coincides with $\theta$ on $M$, we get 
	$$ \lambda_1^G \in (M^\omega)_{\varphi^\omega}' \cap \widetilde{M} = B_\varphi(M\subset \widetilde{M}) =\C. $$
This is a contradiction because $1\in G$ is nontrivial. Thus we get $k=1$ and this finishes the proof.
\end{proof}

\appendix 
%------------------------------------------------------------------
\section{Popa's $\mathcal G$-singular masas in type $\rm III_1$ factors}

The following theorem is an analogue of \cite[Theorem 4.2]{Po83} in the type III setting. It covers the case that $M$ is a type $\rm II_1$ factor, a type $\rm III_\lambda$ factor $(0<\lambda<1)$, or a type $\rm III_1$ factor with trivial bicentralizer. The proof is a rather straightforward adaptation of \cite{Po16,HP17}. So we will give a sketch of the proof.

\begin{thm}\label{thm-typeIII-G-singular}
	Let $M$ be a factor with separable predual and $\varphi\in M_\ast$ a faithful state such that $M_\varphi'\cap M=\C$. Let $\{\theta_n\}_{n\geq 1}\subset \Aut(M)$ be a countable subset such that 
	$$ \theta_n(\varphi)=\varphi\quad \text{and}\quad \theta_n|_{M_\varphi}\not\in \Int(M_\varphi) $$
for all $n\geq 1$. Put $\theta_0:=\id_M$ and $\mathcal G:=\{\theta_n\}_{n\geq 0}$. 

Then there is an abelian von Neumann subalgebra $A\subset M_\varphi$ such that $A$ is a masa in $M$ and that $A$ is $\mathcal G$-singular in the following sense: if $\theta\in \mathcal G$ satisfies 
	$$\Ad(u)\circ \theta(Ap)\subset A$$
for some nonzero projection $p\in A$ and $u\in \mathcal U(M)$, then $\theta=\id_M$ and $up=pu\in A$.
\end{thm}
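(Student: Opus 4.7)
The plan is to follow Popa's Baire category / delta-sequence construction of $\mathcal G$-singular masas from \cite{Po83, Po16, HP17}, adapted to the type III setting. The hypothesis $M_\varphi' \cap M = \C$ forces $N := M_\varphi$ to be a II$_1$ factor: it is a factor by the irreducibility, finite with trace $\tau := \varphi|_N$, and diffuse because $M$ is type III. The inclusion $N \subset M$ is irreducible with expectation $E_N$ satisfying $\varphi = \tau \circ E_N$; since $\theta_n(\varphi) = \varphi$, each $\theta_n$ leaves $N$ invariant and commutes with $E_N$. Fix a masa $A_0 \subset N$ (chosen singular in $N$, which exists by Popa's work) and view the Polish group $\mathcal U(N)$ under the strong operator topology as parameter space for candidate masas $A_u := u A_0 u^* \subset N$.

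I will show that the set of $u\in\mathcal U(N)$ for which $A_u$ is simultaneously (i) a masa in $M$, meaning $A_u'\cap M = A_u$, and (ii) $\mathcal G$-singular in the sense of the theorem, is residual. For (ii), a failure for some $\theta\in\mathcal G$ produces, via Popa's intertwining criterion (Theorem \ref{thm-intertwining}), a nontrivial $\theta(A_u)\preceq_M A_u$. An $E_N$-averaging of the intertwiner, combined with $\theta\circ E_N = E_N\circ\theta$, transfers this to $\theta(A_u)\preceq_N A_u$ inside the II$_1$ factor $N$, where both algebras are masas. The II$_1$ meager arguments of \cite{Po16, HP17} then apply verbatim inside $N$, using the outerness $\theta_n|_N\notin\Int(N)$ for $n\ge 1$ and the generic singularity of masas in $N$ (which is automatic from the choice of $A_0$) for $\theta = \mathrm{id}$. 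Baire category over the countable family $\mathcal G$ yields (ii) for generic $u$.

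The main obstacle is the residuality of (i): in Popa's original II$_1$ argument, masa-ness in the ambient factor is proven via $L^2$-perturbation against the ambient trace, which is not available for the type III factor $M$. The replacement is the ultraproduct reformulation $(M^\omega)_{\varphi^\omega}'\cap M = \mathrm{BC}_\varphi(M) = \C$ of the bicentralizer hypothesis (cf.\ Section \ref{Preliminaries}). If $A_u'\cap M \supsetneq A_u$, then taking any $x\in (A_u'\cap M)\setminus A_u$ and combining with approximately $\varphi^\omega$-centralizing sequences in $N^\omega$ built from spectral projections of generators of $A_u$, one would manufacture a nontrivial element of $(N^\omega)_{\varphi^\omega}'\cap M$, contradicting triviality. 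Quantitatively, for each $x$ in a fixed $\varphi$-dense sequence of $M$, the set $\{u\in\mathcal U(N) : x\in A_u \text{ whenever } x\in A_u'\cap M\}$ is open dense, and countable intersection over such $x$ gives (i) for generic $u$. Combining this residual set with the one from (ii) yields the desired $\mathcal G$-singular masa $A \subset M_\varphi$ via Baire category.
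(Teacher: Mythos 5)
Your proposal takes a genuinely different route (Baire category over the unitary orbit of a fixed masa $A_0\subset N$) from the paper, which instead runs Popa's \emph{inductive} construction from \cite{Po16,HP17}: an increasing sequence of finite-dimensional subalgebras $A_n\subset N$ with quantitative properties (P1)--(P3), whose union is the desired masa. Unfortunately, the two steps where your sketch replaces the paper's hard work both have genuine gaps. First, the residuality of condition (i) is not established. The assertion that $\{u\in\mathcal U(N):x\in A_u\text{ whenever }x\in A_u'\cap M\}$ is open dense is given no argument, and neither openness nor density is clear (relative commutants do not vary continuously in $u$). More seriously, the proposed mechanism is a non sequitur: an element $x\in (A_u'\cap M)\setminus A_u$ commutes only with sequences drawn from $A_u$, not with all $\varphi^\omega$-centralizing sequences of $N$, so it does not produce an element of $(N^\omega)_{\varphi^\omega}'\cap M=\mathrm{BC}_\varphi(N\subset M)$ and the triviality of the bicentralizer cannot be invoked. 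In the paper the masa property in $M$ is forced by the explicit approximation property (P3), i.e.\ $\|E_{A_n'\cap M_k}(x_j)-E_{A_n\vee(N'\cap M_k)}(x_j)\|_\varphi\le 1/n$, built into the induction via \cite[Theorem 3.2]{Po81}; there is no soft substitute offered in your sketch.

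Second, the reduction of $\mathcal G$-singularity to intertwining \emph{inside} $N$ loses exactly the information that makes the type III case hard. A failure of singularity is witnessed by a unitary $u\in\mathcal U(M)$, not of $N$; $E_N$-averaging the intertwiner may well give $0$, and even if it did not, the conclusion $\theta(A_u)\preceq_N A_u$ between two masas of the II$_1$ factor $N$ is no contradiction at all (two masas of $N$ can be intertwined, even conjugate, in $N$ without violating outerness of $\theta|_N$). The paper's actual argument controls the unitary $u\in\mathcal U(M)$ by modular theory: Lemma \ref{lem-RN-irreducible2}(3) produces a nonsingular positive $h$ affiliated with $Ap$ with $\sigma_t^\varphi(u\theta(p))=u\theta(h^{\ri t})$, a spectral cutoff $\delta p_0\le hp_0\le\delta^{-1}p_0$ makes $u\theta(z)$ an $L^2(\varphi)$-bounded left/right multiplier, and only then do the quantitative estimates (P2), tested against the unitaries $v_n\in\mathcal U(A_nf_n)$ supplied by Lemma \ref{lem-G-singular-masa-construction} and the $2\times 2$ matrix trick of Lemma \ref{lem-centralizer-outer}, yield the contradiction $\|zf_{n_k}\|_\varphi\to 0$. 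None of this is addressed by your outline, so the proof as proposed does not go through.
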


We note that, since $\id_M$ is contained in $\mathcal G$, the resulting masa $A\subset M$ is \textit{singular} in the sense that $uAu^*=A$ for $u\in \mathcal U(M)$ implies $u\in A$.

To use ideas of \cite{Po16} in our setting, the next lemma is important.

\begin{lem}\label{lem-centralizer-outer}
	Keep the setting as in Theorem \ref{thm-typeIII-G-singular}. For any $\theta\in \Aut(M)$, define an embedding
	$$ \iota_{\theta}\colon M \ni x \mapsto \left[\begin{array}{cc}
x & 0 \\
0 & \theta (x)
\end{array}\right] \in M\otimes \M_2.$$
If $\theta(\varphi)=\varphi$, then the following conditions are equivalent:
\begin{enumerate}

	\item $\theta|_{M_\varphi}$ is outer;

	\item $\iota_\theta(M_\varphi)'\cap (M\otimes \M_2 ) \subset M\oplus M$.

\end{enumerate}
\end{lem}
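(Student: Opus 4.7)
The plan is to unpack the commutation condition in coordinates. Write an arbitrary element $X \in \iota_\theta(M_\varphi)' \cap (M \otimes \mathbb{M}_2)$ as a $2\times 2$ matrix $\left[\begin{smallmatrix} a & b \\ c & d \end{smallmatrix}\right]$ and compute $[\iota_\theta(x), X] = 0$ entrywise for $x \in M_\varphi$. This gives $xa = ax$ and $\theta(x) d = d \theta(x)$ on the diagonal, together with the two intertwining relations
\begin{equation*}
xb = b\,\theta(x), \qquad \theta(x)\,c = cx, \qquad \text{for all } x \in M_\varphi.
\end{equation*}
Before invoking the hypothesis, observe that $\theta(\varphi) = \varphi$ implies $\theta \circ \sigma_t^\varphi = \sigma_t^\varphi \circ \theta$ for all $t$, and hence $\theta(M_\varphi) = M_\varphi$. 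In particular $\theta(M_\varphi)' \cap M = M_\varphi' \cap M = \C$, so the diagonal entries $a, d$ automatically lie in $\C$. The content of the lemma is therefore the assertion that the off-diagonal entries vanish if and only if $\theta|_{M_\varphi}$ is outer.

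For the implication (1)$\Rightarrow$(2), I would argue by contradiction. Suppose $b \neq 0$ satisfies $xb = b\theta(x)$. Multiplying on the left by $b^*$ and using $\theta(x)b^* = b^* x$ (the adjoint of the same relation), one sees that $b^*b$ commutes with every $\theta(x)$, hence $b^*b \in \theta(M_\varphi)' \cap M = \C$; similarly $bb^* \in \C$. Therefore $b$ is a nonzero scalar multiple of a unitary $u \in \mathcal U(M)$, and the relation $xu = u\theta(x)$ gives $\theta(x) = u^* x u$ for all $x \in M_\varphi$. In particular $u^* M_\varphi u = M_\varphi$, so Lemma \ref{lem-centralizer-singular} forces $u \in M_\varphi$. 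Thus $\theta|_{M_\varphi} = \Ad(u^*)|_{M_\varphi} \in \Int(M_\varphi)$, contradicting (1). The argument ruling out $c \neq 0$ is entirely symmetric (apply the same reasoning after taking adjoints, or equivalently work with $cc^*$).

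For (2)$\Rightarrow$(1), I argue the contrapositive. If $\theta|_{M_\varphi} = \Ad(u)|_{M_\varphi}$ for some $u \in \mathcal U(M_\varphi)$, set $b := u^*$. Then $b\theta(x) = u^* \cdot u x u^* = xu^* = xb$ for all $x \in M_\varphi$, so
\begin{equation*}
\begin{bmatrix} 0 & u^* \\ 0 & 0 \end{bmatrix} \in \iota_\theta(M_\varphi)' \cap (M \otimes \mathbb{M}_2),
\end{equation*}
which is a nonzero off-diagonal element and hence not in $M \oplus M$, contradicting (2).

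The proof is essentially a direct calculation, so I do not expect a serious obstacle; the only delicate point is the need to promote an intertwiner $b$ into a unitary sitting inside $M_\varphi$ itself, which is exactly what the hypothesis $M_\varphi' \cap M = \C$ together with the singularity statement of Lemma \ref{lem-centralizer-singular} delivers.
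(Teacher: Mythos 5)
Your proof is correct and follows essentially the same route as the paper: exhibit the explicit off-diagonal unitary when $\theta|_{M_\varphi}$ is inner, and conversely promote a nonzero intertwiner to a unitary normalizing $M_\varphi$ via $M_\varphi'\cap M=\C$ and Lemma \ref{lem-centralizer-singular}. The only cosmetic difference is that you verify $b^*b,bb^*\in\C$ by hand where the paper simply invokes the polar decomposition.
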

\begin{proof}
	If $\theta|_{M_\varphi} = \Ad(u)$ for some $u\in \mathcal U(M_\varphi)$, then $\left[\begin{array}{cc}
0 & 0 \\
u & 0
\end{array}\right] $ is contained in $\iota_\theta(M_\varphi)'\cap M\otimes \M_2 $. 

If (2) does not holds, then there is a nonzero $a\in M$ such that $\theta(x)a=ax$ for all $x\in M_\varphi$. By the polar decomposition and by $M_\varphi'\cap M=\C$, there is a unitary $u\in \mathcal U(M)$ such that $\theta(x)=uxu^*$ for all $x\in M_\varphi$. Then $\Ad(u)$ globally preserves $M_\varphi$, hence $u$ is in $M_\varphi$ by Lemma \ref{lem-centralizer-singular}. Thus (1) does not hold.
\end{proof}

\begin{proof}[{\bf Proof of Theorem \ref{thm-typeIII-G-singular}}]
	We fix the following setting.
\begin{itemize}
	\item Let $M$ be a von Neumann algebra with separable predual, $\varphi\in M_\ast$ a faithful state, and $N\subset M_\varphi$ a type $\rm II$ von Neumann subalgebra such that $N'\cap M\subset N$. Set 
	$$ \Aut_\varphi(N\subset M):= \{ \theta\in \Aut(M)\mid \theta(\varphi)=\varphi,\ \theta(N)=N \} .$$

	\item Let $\{\theta_n\}_{n\geq 1}\subset \Aut_\varphi(N\subset M)$ be such that
	$$\iota_{n}(N)'\cap (M\otimes \M_2)\subset M\oplus M, \quad \text{for all }n\geq 1,$$
where $\iota_n:=\iota_{\theta_n}$ is as in Lemma \ref{lem-centralizer-outer}.
\end{itemize}
	We will find $A\subset N$ that satisfies the conclusion of Theorem \ref{thm-typeIII-G-singular} for $\mathcal G=\{\theta_n\}_{n\geq 0}$ with $\theta_0=\id$. Thanks to Lemma \ref{lem-centralizer-outer}, this gives the proof of Theorem \ref{thm-typeIII-G-singular}. 

For any von Neumann subalgebra $B\subset M$ that is globally invariant by $\sigma^\varphi$, we denote by $E_B\colon M\to B$ the unique $\varphi$-preserving conditional expectation, and by $e_N$ the corresponding Jones projection.

\begin{lem}\label{lem-G-singular-masa-construction}
	Keep the setting. We fix a finite dimensional von Neumann subalgebra  $A\subset N$, a projection $f\in A'\cap N$, and finite subsets $X\subset M$ and $Y\subset \Aut_\varphi(N\subset M)$. 

Then for any $\varepsilon>0$, there exist a finite dimensional von Neumann subalgebra $D\subset N$, which contains $A$ and $f$, and $v \in \mathcal U(Df)$ such that
	$$  \| E_{D'\cap M}( y^*\theta(v) x  )f^\perp\|_{\varphi}< \varepsilon\quad \text{for all }x,y\in X \text{ and  } \theta\in Y.$$
\end{lem}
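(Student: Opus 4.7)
The plan is to prove this one-step refinement via a Popa-style Haar/clock averaging inside a matrix subalgebra of $N$ that lies in the relative commutant of $A\vee\{f\}$. The argument is the standard one used to construct singular masas in type $\rm II_1$ factors (\cite{Po83,Po16,HP17}), suitably adapted to our type III setting, where $N$ is a type II subalgebra of $M_\varphi$.

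First I would absorb $f$ into $A$ by setting $A_0 := A \vee \{f, f^\perp\} \subset N$. This is still finite-dimensional, contains $A$ and $f$, and sits in $N$. Since $N$ is of type II and $A_0$ is finite-dimensional, $A_0'\cap N$ is of type II, so it contains a unital matrix subalgebra $B\cong\M_n(\C)$ for every $n\geq 1$. Pick such a $B$ of size $n$ (large, to be chosen at the end) with matrix units $\{e_{ij}\}_{i,j=1}^n$, and set $D := A_0\vee B\subset N\subset M_\varphi$. Since $B$ commutes with $A_0\ni f$, we have $Df = (Bf)\vee(A_0 f)$ and $A,f \in D$.

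The candidate unitary will be $v := uf$, where $u$ is either a Haar-random element of $\mathcal{U}(B)$ or a clock unitary $\sum_j \zeta^j e_{jj}$ with $\zeta$ a primitive $n$-th root of unity. Using, respectively, the second-moment formula $\int u_{ij}\bar u_{kl}\,du = n^{-1}\delta_{ik}\delta_{jl}$ for Haar measure on $\mathcal{U}(B)$ or character orthogonality in the clock case, the problem reduces to the quantitative vanishing
\[
	S_n(x,y,\theta) := \frac{1}{n}\sum_{i,j=1}^n \bigl\|\rE_{D'\cap M}(y^*\theta(fe_{ij})x)f^\perp\bigr\|_\varphi^2 \longrightarrow 0 \quad \text{as } n\to\infty,
\]
uniformly in $(x,y,\theta)\in X\times X\times Y$. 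A pigeonhole argument over this finite set then produces a single $v$ working for all data simultaneously.

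The main obstacle is establishing this vanishing. For $\theta=\id_M$ it is a direct bimodule computation using $e_{ij}\in D$ and the fact that $\rE_{D'\cap M}$ kills off-diagonal matrix content. For a nontrivial $\theta\in Y$ the hypothesis $\iota_\theta(N)'\cap(M\otimes\M_2)\subset M\oplus M$ is essential: via Lemma \ref{lem-centralizer-outer} it is equivalent to outerness of $\theta|_N$, and via Popa's intertwining criterion (Theorem \ref{thm-intertwining}(2)) together with Lemma \ref{lem-intertwining-finitelymany} it provides a common net of unitaries $(w_k)_k\subset\mathcal{U}(N)$ with $\|\rE_{\theta(N)}(b^*w_k a)\|_\varphi\to 0$ for all $a,b\in M$ and all $\theta\in Y$ simultaneously. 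A weak-$\ast$/compactness argument then converts a hypothetical non-vanishing of $S_n$ into a nonzero $N$--$\theta(N)$ intertwiner in $M$, contradicting the hypothesis. The delicate point is manipulating the conditional expectation $\rE_{D'\cap M}$, which does not directly see $\theta(N)$, to match the intertwining information; I would follow the corresponding steps in \cite{Po16,HP17} closely.
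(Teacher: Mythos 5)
Your reduction to the second-moment quantity $S_n$ is fine, but the claimed vanishing $S_n\to 0$ is false in general, already for $\theta=\id$, and this is exactly the step on which the whole argument rests. Concretely, take $A=\C$, $f=1$, $D=B\cong \M_n(\C)\subset N$, and let $w\in\mathcal U(M)$ be a unitary with $w^*Bw\subset B'\cap N$ (such $w$ exists since $N$ is of type $\rm II$: put a second commuting copy of $\M_n(\C)$ inside $B'\cap N$ and conjugate one onto the other). With $x=y=w$ one has $w^*e_{ij}w\in D'\cap M$, hence $E_{D'\cap M}(y^*e_{ij}x)=w^*e_{ij}w$ and $S_n=\frac1n\cdot n^2\cdot\frac1n=1$ for every $n$. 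So the matrix-algebra average alone cannot produce the decay: one must position the unitary $v$ correctly relative to $X$ and to the target of the conditional expectation, and your proposal contains no mechanism for doing so. Your fallback for nontrivial $\theta$ — deducing decay from outerness of $\theta|_N$ via a hypothetical $N$--$\theta(N)$ intertwiner — is also off target: the expectation is onto $D'\cap M$, a huge algebra with no relation to the $N$ versus $\theta(N)$ position, so a failure of $S_n\to0$ does not yield such an intertwiner; moreover $\id_M$ may belong to $Y$, so outerness cannot be what makes the lemma true. In fact the paper's proof of this lemma uses no outerness at all (that hypothesis enters only later, for property $\rm (P3)$ and the masa/singularity conclusions).

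The mechanism the paper actually uses is different in both of the places where your argument is incomplete. First, $B$ is taken to be an \emph{abelian} subalgebra of $N$ that is a masa in the ambient $M$ and contains $A\vee\C f$ (Popa's theorem \cite{Po81}), written as $B=\bigvee_k B_k$ with $B_k$ finite dimensional; since $e_{B_k'\cap M}\to e_{B'\cap M}=e_B$ strongly, the troublesome expectation $E_{D'\cap M}(\;\cdot\;)f^\perp$ can be replaced, up to $\varepsilon$, by $E_B(f^\perp\,\cdot\,f^\perp)$ for $D_2:=(B_k)_{f^\perp}$ with $k$ large. Second, the unitary $v$ is a \emph{single} unitary of the diffuse algebra $A_f'\cap N_f$, obtained from Popa's criterion (Theorem \ref{thm-intertwining}(2) via Lemma \ref{lem-intertwining-finitelymany}) applied to the non-embeddings $A_f'\cap N_f\not\preceq_M\theta^{-1}(B_{f^\perp})$, $\theta\in Y$; these hold for the soft reason that a type $\rm II$ algebra never intertwines into an abelian one, uniformly in $\theta$ (including $\theta=\id$), using only $\theta(\varphi)=\varphi$ and $\theta(N)=N$ to move the expectation across $\theta$. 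Note that Popa's criterion can never give such decay for unitaries ranging over a fixed finite-dimensional algebra (finite-dimensional algebras always intertwine into anything), which is another sign that your averaging over $\mathcal U(B)$, $B\cong\M_n(\C)$, is the wrong device here. Finally, one takes $D:=D_1\oplus D_2$ with $D_1\subset N_f$ finite dimensional abelian containing $A_f$ and (an approximant of) $v$; your construction $D=A_0\vee\M_n(\C)$ never exploits the splitting along $f$, which is what makes the approximation of $E_{D'\cap M}(\;\cdot\;)f^\perp$ by $E_B$ legitimate.
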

\begin{proof}
	Since $A\vee \C f \subset N$ is finite dimensional, by \cite[Theorem 3.2]{Po81} (see also \cite[Lemma 2.2]{HP17}), there is a commutative von Neumann subalgebra $B\subset N$ that is a masa in $M$ and that contains $A\vee \C f$. 
We can write $B=\bigvee_k B_k$, where $\{B_k\}_k$ is an increasing sequence of finite dimensional von Neumann algebras such that $B_0:=A\vee \C f$.

Since $A_f'\cap N_f$ is of type $\rm II$ and $B$ is of type I, we have
	$$ A_ f'\cap N_f\not \preceq_M \theta^{-1}(B _{f^\perp}),\quad \text{for all}\quad \theta\in Y.$$
By Lemma \ref{lem-intertwining-finitelymany}, take $v\in \mathcal U(A_f'\cap N_f)$ such that for all $\theta\in Y$,
	$$ \|  E_{B}(f^\perp y^* \theta(v) xf^\perp)\|_\varphi=\| E_{\theta^{-1}(Bf^\perp)}(  \theta^{-1}(f^\perp y^*)  v  \theta^{-1}(xf^\perp ) )\|_\varphi<\varepsilon,\quad \text{for all}\quad x,y\in X.$$
By approximating $v$ by finite sums of projections, we may assume that $v$ is contained in a finite dimensional abelian subalgebra in $A_f'\cap N_f$. Hence there exists a finite dimensional commutative von Neumann algebra $D_1 \subset N_f$ containing $v$ and $A_f$. 

Since $ e_{B_k'\cap M } \to e_{B'\cap M} = e_{B}$ converges strongly, there exists $k$ such that
	$$  \| E_{B_k'\cap M } ( y^* \theta(v) x  )f^\perp\|_\varphi=\| E_{B_k'\cap M } (f^\perp y^* \theta(v) x f^\perp )\|_\varphi <\varepsilon,\quad \text{for all}\quad x,y\in X,\quad \theta\in Y.$$
Here we can exchange $E_{B_k'\cap M }$ by $E_{D_2'\cap M_{f^\perp} } $, where  $D_2:=(B_k)_{f^\perp}$. Then $D:=D_1\oplus D_2 \subset N_f \oplus N_{f^\perp}\subset N$ works. 
\end{proof}

	Take any $\ast$-strongly dense subset $\{x_n\}_{n\in \N}\subset (M)_1$. Take any projections $\{e_n\}_{n\in \N}$ in $N$ which is $\ast$-strongly dense in all projections in $N$. We may assume that $e_0=x_0=1$ and that each $e_k$ appears infinitely many times in $\{e_n\}_{n\in \N}$. 

For each $n>0$, we denote the inclusion $\iota_n(M)\subset M\otimes \M_2$ by $M\subset M_n$. For $n=0$, we put $M_0:=M$.

\begin{claim}
	There exist an increasing sequence of finite dimensional von Neumann subalgebras $A_n\subset N$, projections $f_n\in A_n$, unitaries $v_n \in \mathcal U(A_nf_n)$ such that
\begin{itemize}
	\item[$\rm (P1)$] $\|f_n - e_n\|_\varphi\leq 7 \| e_n-E_{A_{n-1}'\cap N}(e_n) \|_\varphi$;

	\item[$\rm (P2)$] $ \| E_{A_{n}'\cap M}( x_i^* \theta_k(v_n) x_j  )f_n^\perp\|_{\varphi}\leq \frac{1}{n}$\quad for all $0\leq i,j,k\leq n$;

	\item[$\rm (P3)$] $ \| E_{A_{n}'\cap M_k}(x_j) - E_{A_n\vee (N'\cap M_k)}(x_j) \|_{\varphi}\leq \frac{1}{n}$\quad for all $0\leq j,k\leq n$.

\end{itemize}
\end{claim}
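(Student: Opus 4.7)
The plan is to construct $(A_n, f_n, v_n)$ by induction on $n$, with $A_0 := \C\cdot 1$, $f_0 := v_0 := 1$ as the (vacuous) base case. At the inductive step $n\geq 1$, given $(A_m,f_m,v_m)_{m<n}$, I would carry out three sub-steps. First, for (P1): set $a := E_{A_{n-1}'\cap N}(e_n)$, a positive contraction in the finite algebra $A_{n-1}'\cap N$, and take $f_n$ to be the spectral projection $\mathbf{1}_{(t,1]}(a)$ for a suitable threshold $t\in (0,1)$, following the standard spectral-optimization argument going back to Popa (cf.\ \cite[Lemma~2.2]{HP17}), which yields the constant $7$. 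Note $f_n\in A_{n-1}'\cap N$, so $A_{n-1}\vee \C f_n$ is finite-dimensional and abelian. Second, for (P2): apply Lemma \ref{lem-G-singular-masa-construction} with $A := A_{n-1}\vee \C f_n$, $f := f_n$, $X := \{x_0,\dots,x_n\}$, $Y := \{\theta_0,\dots,\theta_n\}$, and $\varepsilon := 1/n$, to obtain a finite-dimensional abelian $D\subset N$ containing $A$ and a unitary $v_n \in \mathcal U(Df_n)$ for which (P2) holds with $D$ in place of $A_n$. Third: enlarge $D$ to a finite-dimensional abelian $A_n\subset N$ containing $f_n$ that also satisfies (P3).

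The observation making the third sub-step possible is that any such enlargement preserves (P2): since $A_n\supset D$ gives $A_n'\cap M\subset D'\cap M$, and $f_n^\perp \in A_n$ commutes with $A_n$ (hence lies in both $D'\cap M$ and $A_n'\cap M$), the module property and $L^2$-contractivity of the $\varphi$-preserving conditional expectations yield
$$\|E_{A_n'\cap M}(x_i^*\theta_k(v_n)x_j)f_n^\perp\|_\varphi \leq \|E_{D'\cap M}(x_i^*\theta_k(v_n)x_j)f_n^\perp\|_\varphi < 1/n.$$
The realization of (P3) would then follow the approximation scheme of \cite{Po16,HP17}: as $B$ ranges over finite-dimensional abelian subalgebras of $N$ containing $D$ (directed by inclusion), a suitable reverse-martingale argument — refined so that $A_n$ progressively cancels the off-diagonal intertwiners arising in $A_n'\cap M_k$ against $\theta_k$ — produces an $A_n := B$ such that for each of the finitely many pairs $0\leq j,k\leq n$ one has $\|E_{A_n'\cap M_k}(x_j) - E_{A_n\vee (N'\cap M_k)}(x_j)\|_\varphi < 1/n$. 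The inclusions $N'\cap M_k\subset A_n\vee(N'\cap M_k)\subset A_n'\cap M_k$ and the identity $E_{A_n\vee(N'\cap M_k)}\circ E_{A_n'\cap M_k} = E_{A_n\vee(N'\cap M_k)}$, combined with the $L^2$-contractivity of $I - E_{A_n\vee(N'\cap M_k)}$, then reduce the task to making $E_{A_n'\cap M_k}(x_j)$ close to $N'\cap M_k$ in $L^2(\varphi)$, which is what the approximation step achieves.

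The main obstacle is (P3): it is a delicate maximality-type condition constraining the relative position of the growing abelian $A_n$ inside each $M_k=M\otimes \M_2$ through the twisted inclusion $\iota_k$, and its inductive realization is the technical heart of the construction, drawing on Popa's singular masa technology \cite{Po16,HP17}. What makes the whole scheme tractable is the monotonicity observation displayed above: once $f_n$ and $v_n$ are fixed by Steps (a) and (b), the last sub-step for (P3) may be carried out freely within $N$ without destroying (P2).
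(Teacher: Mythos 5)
Your proposal is correct and follows essentially the same route as the paper, whose proof of this claim is precisely "run the induction of the first part of \cite[Theorem 1]{HP17}, using Lemma \ref{lem-G-singular-masa-construction} for (P2) and \cite[Lemma 1.2.2]{Po16} for (P3)": spectral projection of $E_{A_{n-1}'\cap N}(e_n)$ for (P1), the intertwining lemma for $v_n$ and (P2), then Popa's local quantization to enlarge to $A_n$ for (P3). Your monotonicity observation (that passing from $D$ to a larger abelian $A_n\ni f_n$ only improves the bound in (P2), via $E_{A_n'\cap M}=E_{A_n'\cap M}\circ E_{D'\cap M}$, the module property over $f_n^\perp\in A_n\subset A_n'\cap M$, and $L^2$-contractivity of $\varphi$-preserving expectations) is exactly the point that makes the three sub-steps compatible, and it is valid.
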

\begin{proof}
	We can follow the proof of the first part of \cite[Theorem 1]{HP17}, by using Lemma \ref{lem-G-singular-masa-construction} and \cite[Lemma 1.2.2]{Po16} for (P2) and (P3) respectively.
\end{proof}

Using $A_n$ in the claim, we define $A:=\bigvee_n A_n \subset N$. Then condition (P3) implies 
	$$A'\cap M_n=   A\vee (N'\cap M_n)\quad \text{for all}\quad n\geq 0.$$
Since $N'\cap M_n \subset M\oplus M$ for $n\geq1$ by assumption, this means that $A$ is a masa in $M$ and that $A'\cap M_n = A\oplus A$ for $n\geq 1$. 

Suppose now that there exist $\theta\in \mathcal G$, $u\in \mathcal U(M)$, a projection $p\in A$ such that
	$$\theta^u(Ap) \subset A ,\quad \text{where}\quad \theta^u:=\Ad(u)\circ \theta.$$
Since $A,\theta^u(A)$ are masas in $M$, putting $q:=\theta^u(p)\in A$, we have a $\ast$-isomorphism
	$$\theta^u \colon pMp \to qMq\quad \text{such that}\quad \theta^u(Ap)=Aq.$$
Assume first $\theta=\id$ and $\theta^u|_{Ap}=\id_{Ap}$. Then since $A$ is a masa in $M$, we get $up=pu\in A$ that is the conclusion. 
So from now on, we assume either that $\theta\neq \id$, or $\theta=\id$ and $\theta^u|_{Ap}\neq \id_{Ap}$. Then we will deduce a contradiction.

\begin{claim}
	There exists a nonzero projection $z\in Ap$ such that $\theta^u(z)z=0$.
\end{claim}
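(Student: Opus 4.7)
The plan is to observe first that $\theta^u\colon Ap \to Aq$, with $q := \theta^u(p) \in A$, is a $*$-isomorphism onto $Aq$: indeed the image $\theta^u(Ap) \subset A$ sits inside $qMq$ as a masa (being the image of the masa $Ap \subset pMp$ under a $*$-iso), so it must coincide with $qAq = Aq$. The argument then splits according to whether the projections $p$ and $q$ agree inside the masa $A$.

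Suppose first that $p \neq q$. If $p \not\leq q$, take $z := p - p \wedge q \in Ap$, which is nonzero. Since $\theta^u(z) \in Aq$ satisfies $\theta^u(z) \leq q$, one computes
\[
\theta^u(z)\, z \;\leq\; q\,(p - p \wedge q) \;=\; pq - p \wedge q \;=\; 0,
\]
and the claim follows. Symmetrically, if $q \not\leq p$, set $w := q - p \wedge q \in Aq$, which is nonzero, and pull back $z := (\theta^u)^{-1}(w) \in Ap$; then $\theta^u(z) = w$ is orthogonal to $p$ while $z \leq p$, so $\theta^u(z)\, z = 0$ again.

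Now assume $p = q$, so that $\theta^u|_{Ap}$ is a $*$-automorphism of the masa $Ap$. Realize $Ap \cong L^\infty(X,\mu)$ for a standard probability space $(X,\mu)$ and let $\phi\colon X \to X$ be the Borel automorphism implementing $\theta^u|_{Ap}$. We may assume $\phi \neq \id$ off a null set: when $\theta = \id_M$ this is the running hypothesis $\theta^u|_{Ap} \neq \id_{Ap}$, and in the residual sub-case $\theta \neq \id_M$ with $\theta^u|_{Ap} = \id_{Ap}$ we would have $\theta|_{Ap} = \Ad(u^*)|_{Ap}$, which must be dispatched separately using Lemma~\ref{lem-centralizer-outer} together with the outerness of $\theta|_{M_\varphi}$ enforced by the choice of $\mathcal G$. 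Granting this, equip $X$ with a compatible Polish metric and set $Y_n := \{x : d(x, \phi(x)) > 1/n\}$; some $n$ gives $\mu(Y_n) > 0$. Cover $Y_n$ by countably many open balls of radius less than $1/(2n)$ and pick one, say $B$, with $\mu(B \cap Y_n) > 0$. Then $E := B \cap Y_n$ has diameter less than $1/n$ while $\phi$ moves every point of $E$ by more than $1/n$, so $\phi(E) \cap E = \emptyset$, and the projection $z := \chi_E \in Ap$ satisfies $\theta^u(z)\, z = 0$.

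The main difficulty is the measurable-wandering construction in the case $p = q$, together with justifying that $\phi$ is nontrivial: the algebraic case $p \neq q$ is automatic by a straightforward cutting-down, whereas the automorphism case requires invoking the Polish structure of the spectrum of $Ap$, and the hypothesis on $\mathcal G$ (via the outerness conclusion of Lemma~\ref{lem-centralizer-outer}) is what prevents the pathological situation in which $\theta^u$ fixes $Ap$ pointwise while $\theta$ itself is nontrivial.
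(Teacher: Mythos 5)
Your case analysis on $p$ versus $q:=\theta^u(p)$ is exactly the paper's: the cutting-down $z_0=p-pq$ (resp.\ $w_0=q-pq$ pulled back through $(\theta^u)^{-1}$) handles $p\neq q$, and for $p=q$ one needs a wandering projection for the nontrivial automorphism $\theta^u|_{Ap}$ of the abelian algebra $Ap$. Your measurable realization of that automorphism is fine (the paper simply asserts the existence of $z$ there; a purely algebraic alternative is to take a projection $e$ with $\theta^u(e)\neq e$ and use $f:=e-e\theta^u(e)$ or its mirror image, noting $\theta^u(f)f\leq \theta^u(e)f=0$).

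The genuine gap is the step you defer: ruling out the sub-case $\theta=\theta_n$ with $n\geq 1$ and $\theta^u|_{Ap}=\id_{Ap}$. The route you sketch --- Lemma \ref{lem-centralizer-outer} plus outerness of $\theta|_{M_\varphi}$ --- does not suffice. Outerness of $\theta|_{M_\varphi}$ only yields $\iota_\theta(M_\varphi)'\cap(M\otimes\M_2)\subset M\oplus M$, i.e.\ control of the relative commutant of the \emph{whole} centralizer; it in no way prevents $\theta$ from being implemented by a unitary on the small masa $Ap\subset M_\varphi$ (for a generic masa this can certainly happen, and then $\theta^u|_{Ap}=\id_{Ap}$ with $u$ the implementing unitary). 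What actually kills this sub-case is the property $A'\cap M_n=A\oplus A$ for $n\geq 1$, which is \emph{not} a consequence of Lemma \ref{lem-centralizer-outer} alone but of the construction of $A$ via condition (P3) (combined with $N'\cap M_n\subset M\oplus M$). Concretely: from $u\theta(ap)u^*=ap$ for all $a\in A$ one gets $pu=u\theta(p)$ and hence $apu=pu\theta(a)$, so $\left[\begin{smallmatrix}0 & pu\\ 0&0\end{smallmatrix}\right]$ lies in $\iota_n(A)'\cap M_n=A\oplus A$; being off-diagonal it must vanish, contradicting $p\neq 0$. Without invoking this specific feature of the constructed masa, the claim is not established in that sub-case.
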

\begin{proof}
	If $\theta = \theta_n$ $(n\geq 1)$ and $\theta^u|_{Ap} = \id_{Ap}$, then $ \left[\begin{array}{cc}
0 & pu \\
0 & 0
\end{array}\right] $ is contained in $A'\cap M_n = A\oplus A$, hence a contradiction. So we have $\theta^u|_{Ap} \neq \id_{Ap}$ in this case. The same holds if $\theta=\theta_0=\id_M$ by assumption. 

Put $q:=\theta^u(p)$. If $p=q$, then $\theta^u$ defines a nontrivial automorphism on $Ap$, so that $z\in Ap$ exists. If $p\neq q$, then putting $z_0:=p-pq\in Ap$ and $w_0:=q-pq\in Aq$, we have $z_0\neq 0$ or $w_0\neq 0$. If $z_0\neq 0$, then $z:=z_0$ works. If $w_0\neq 0$, then $z:=(\theta^u)^{-1}(w_0)$ works.
\end{proof}

We apply Lemma \ref{lem-RN-irreducible2}(3) to $A\subset M$ and $(\theta^u)^{-1} \colon pMp \to qMq$ with $(\theta^u)^{-1}(Ap)=Aq$. By using $(\theta^u)^{-1}=\Ad(\theta^{-1}(u^*))\circ \theta^{-1}$ and $\theta(\varphi)=\varphi$, there is a unique nonsingular positive element $h$, which is affiliated with $Ap$, such that
\begin{align*}
	p(\varphi\circ \theta^u)p = \varphi_h,\quad \sigma_t^{\varphi}(u\theta(p))=u\theta(h^{\ri t}) ,\quad t\in \R.
\end{align*}
There exists a spectral projection $p_0\in Ap$ of $h$ such that
	$$ p_0z\neq 0,\quad \delta p_0 \leq hp_0 \leq \delta^{-1}p_0\quad \text{for some }\delta>0 .$$
To deduce a contradiction, up to replacing $p_0,zp_0,\theta^u(p_0)$ by $p,z,q$, we may assume $p=p_0$ and $z\leq p$. Then $u\theta(z)$ is a partial isometry, for which there is $C>0$ such that
	$$ \|x u\theta(z)\|_\varphi \leq \kappa_1 \|x\|_\varphi,\quad  \|x (u\theta(z))^*\|_\varphi \leq \kappa_2 \|x\|_\varphi ,\quad \text{for all } x\in M.$$
Now we follow the proof of \cite[Theorem 1]{HP17}, and get that
\begin{enumerate}
	\item[$\rm (a)$] $\|z f_n\|_\varphi = \|\theta(v_nz)\|_\varphi\leq \kappa_1 \|E_{A_n'\cap M}(u \theta(v_nz) u^*)z^\perp \|_\varphi $, \quad for all $n\in \N$;

	\item[$\rm (b)$] $\|E_{A_n'\cap M}(u \theta(v_n z)u^*)z^\perp \|_\varphi 
	\leq \|E_{A_n'\cap M}( y^* \theta(v_n) x)z^\perp \|_\varphi + \kappa_2 \| u-y^* \|_\varphi + \|  zu^* - x\|_\varphi $, \quad for all $x,y\in M$;

	\item[$\rm (c)$] $\|e_n - f_n\|_\varphi \leq 7\|e_n-z\|_\varphi$, \quad for all $n\in \N$.

\end{enumerate}
Take a subsequence $\{n_k\}_k$ such that $e_{n_k} \to z$ strongly. Then item (c) implies $e_{n_k} - f_{n_k}$ converges to $0$ strongly, so that $zf_{n_k}\to z$ as well. We show that $zf_{n_k}$ also converges to $0$, a contradiction. 

Fix any $\varepsilon>0$ and take $x_i,x_j$ such that $\kappa_2\| u-x_i^* \|_\varphi + \|  zu^* - x_j\|_\varphi <\varepsilon$. Then by items (a) and (b), 
\begin{align*}
	\|z f_n\|_\varphi
	&\leq \|E_{A_n'\cap M}(u \theta(v_n z) u^*)z^\perp \|_\varphi \\
	&\leq \|E_{A_n'\cap M}( x_i^* \theta(v_n) x_j)z^\perp \|_\varphi + \varepsilon\\
	&\leq \|E_{A_n'\cap M}( x_i^* \theta(v_n) x_j)f_n^\perp \|_\varphi +\|z^\perp - f_n^\perp\|_\varphi+ \varepsilon.
\end{align*}
Then (P2) implies
\begin{align*}
	\limsup_k\|z f_{n_k}\|_\varphi
	&\leq \limsup_k(\|E_{A_{n_k}'\cap M}( x_i^* \theta(v_{n_k}) x_j)f_{n_k}^\perp \|_\varphi +\|z^\perp - f_{n_k}^\perp\|_\varphi)+ \varepsilon = \varepsilon.
\end{align*}
Letting $\varepsilon\to0$, we get $zf_{n_k}\to 0$ strongly, as desired.
\end{proof}

\end{document}